\DeclareSymbolFont{cyrletters}{OT2}{wncyr}{m}{n}
\DeclareMathSymbol{\Sha}{\mathalpha}{cyrletters}{"58}
\DeclareMathSymbol{\Bcyr}{\mathalpha}{cyrletters}{"42}
\newcommand{\N}{\mathbb{N}}
\newcommand{\Z}{\mathbf{Z}}
\renewcommand{\C}{\mathbf{C}}
\renewcommand{\H}{\mathrm{H}}
\DeclareMathOperator{\Gal}{Gal}
\DeclareMathOperator{\id}{id}
\DeclareMathOperator{\Hom}{Hom}
\DeclareMathOperator{\Spec}{Spec}
\DeclareMathOperator{\Inf}{Inf}
\DeclareMathOperator{\Res}{Res}
\DeclareMathOperator{\Bad}{\mathrm{Bad}}
\def\@makechapterhead#1{%
  \vspace*{10\p@}%
  {\parindent \z@ \raggedright \normalfont
    \ifnum \c@secnumdepth >\m@ne
      \if@mainmatter
        \Huge\bfseries \thechapter.\space%
      \fi
    \fi
    \interlinepenalty\@M
    \Huge \bfseries #1\par\nobreak
    \vskip 40\p@
  }}
\let\expandafter\oldproof\csname\string\proof\endcsname
\let\oldendproof\endproof
\renewenvironment{proof}[1][\proofname]{%
  \oldproof[\bfseries   #1]%
}{\oldendproof}
\theoremstyle{definition}
\newtheorem{thm}{Theorem}[section]
\newtheorem{thmx}{Theorem}
\newtheorem{defi}[thm]{Definition}
\newtheorem*{defi*}{Definition}
\newtheorem{prop}[thm]{Proposition}
\newtheorem*{prop*}{Proposition}
\newtheorem{lemma}[thm]{Lemma}
\newtheorem{rmk}[thm]{Remark}
\title{Reocurrence and weak approximation over geometric global fields}
\author{ Felipe Gambardella\\
\vspace*{-1ex}\small \textit{Centre de Math\'ematiques Laurent Schwartz - \'Ecole Polytechnique} \\
\texttt{felipe.gambardella@polytechnique.edu}}
\date{}
\begin{document}

\maketitle
\begin{abstract}
    In this article, we prove a Reocurrence Theorem over function fields of curves over $\C(\! (t)\! )$ and over finite extensions of the Laurent series field $\C(\! (x,y)\! )$. This provides a partial replacement to Chebotarev's Theorem over such fields. A concrete application to the study of weak approximation for homogeneous spaces under $\mathrm{SL}_n$ and with finite stabilizers is given at the end of the article.
\end{abstract}
\section{Introduction}

Let $K$ be a number field and let $\Omega_K$ be the set of places of $K$. In order to study rational points on $K$-varieties, it is often useful to introduce the local-global principle and weak approximation. A family of $K$-varieties $\mathcal{F}$ is said to satisfy the local-global principle if any variety $X \in \mathcal{F}$ that has points in all the completions of $K$ has rational points. It is said to satisfy weak approximation if, for any $X \in \mathcal{F}$, the set of rational points $X(K)$ is dense in the product $\prod_{v \in \Omega_K} X(K_v)$. Some families of varieties do satisfy these properties -for instance quadrics according to the Hasse-Minkowski theorem- while others do not. 

In recent years, there has been a growing interest in similar questions over some two-dimensional fields naturally arising in geometry. Consider a field $K$ of one of the following two kinds:
\begin{itemize}
    \item[(a)] the function field of a smooth projective curve $C$ over $\C(\! (t)\! )$;
    \item[(b)] a finite extension of the Laurent series field $\C(\! (x,y)\! )$ in two variables over the complex numbers. 
\end{itemize}
Rational points on varieties over such a field $K$ have been studied both by cohomological techniques based on duality theorems for Galois cohomology (e.g. \cite{CTH2015DualCt}, \cite{diego2019dual2dim}, \cite{DiegoLuco2021LocalGlobalHomoGGF} and \cite{Haowen2023WAHomoGGF}) and by patching techniques (e.g. \cite{HHK2015lgTorsorsArithmeticCurves} and \cite{CTHHKPS2019LocalGlobalZeroCyclesFunctionFields}).

Many results concerning the local-global principle and weak approximation over number fields crucially rely on Chebotarev's Theorem - more precisely on the non-existence of non-trivial everywhere locally trivial cyclic extensions. This result turns out to fail over our field $K$ (see \cite[Remarque 5.4]{CTPS2016LoisReciproSup} and \cite[Theorem 5]{Jaw2001StrongHasse}). The main purpose of this article consists in proving a reocurrence theorem that is, in some situations, a good replacement to Chebotarev's Theorem for the field $K$.

In order to state it, let us introduce the set of places of the field $K$ we will take into account. In case (a), when $K$ is the function field of a smooth projective curve $C$ over $\C(\! (t)\! )$, we consider the set $C^{(1)}$ of places induced by a closed point of the curve $C$. In case (b), when $K$ is a finite extension of the Laurent series field $\C(\! (x,y)\! )$, we introduce the integral closure $A$ of $\C[\! [x,y]\! ]$ in $K$, we let $\mathfrak{m}$ be the maximal ideal of $A$, we set $C:=\Spec A \setminus \{\mathfrak{m}\}$ and we consider the set $C^{(1)}$ of places induced by a closed point of the Dedekind scheme $C$. 

    \begin{thmx}[Theorem \ref{thm reocurrence}] \label{thm reocurrence intro}
         Let $K$ be a field of type (a) or (b) as above. For each finite Galois extension $L/K$ with group $G$ and  each place $v\in C^{(1)}$ that is unramified in $L/K$, there exists infinitely many places $w \in C^{(1)}$ such that the decomposition groups of $v$ and $w$ are conjugates as subgroups of $G$.
    \end{thmx}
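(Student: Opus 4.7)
The plan is to reformulate the desired conclusion via a cyclic Kummer subextension and then to construct the required places using a geometric deformation argument on a two-dimensional model.

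Since $\kappa(v)\cong\mathbb{C}(\!(s)\!)$ has absolute Galois group $\hat{\mathbb{Z}}$, the group $D_v$ is cyclic of some order $n$. Setting $M:=L^{D_v}$, the extension $L/M$ is cyclic of degree $n$ and, as $\mu_n\subset\mathbb{C}\subset M$, is Kummer, so $L=M(\sqrt[n]{a})$ for some $a\in M^{\times}$. A short decomposition-group calculation shows that $D_w$ is conjugate to $D_v$ in $L/K$ if and only if there exists a place $w'$ of $M$ above $w$ that is fully inert in $L/M$ (i.e.\ $D_{w'/M}=\Gal(L/M)=D_v$) and has trivial residue extension $\kappa(w')=\kappa(w)$ over $w=w'|_K$; these two conditions jointly force $D_W=D_v$ for the unique $W$ above $w'$ in $L$.

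Next, local Kummer theory will convert the fully-inert condition into a concrete criterion. Computing $M_{w'}^{\times}/M_{w'}^{\times n}\cong(\mathbb{Z}/n)^2$ (the residual factor being $\kappa(w')^{\times}/\kappa(w')^{\times n}\cong\mathbb{Z}/n$ via the valuation $v_2$ on $\kappa(w')$, since $\mathbb{C}^{\times}$ is $n$-divisible), one sees that $w'$ is fully inert iff $w'(a)\in n\mathbb{Z}$ and $\gcd(n,v_2(\bar{a}|_{w'}))=1$. The distinguished place $v_M$ of $M$ above $v$ satisfies this, and hence its specialization $\bar{v}_0$---to the special fiber of a regular projective model $\mathcal{C}_M$ over $\mathbb{C}[\![t]\!]$ in case (a), or to the maximal ideal of $A_M$ in case (b), where $A_M$ is the integral closure of $\mathbb{C}[\![x,y]\!]$ in $M$---lies in the zero (or pole) locus of $a$ on the corresponding closed fibre.

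The construction then parametrizes horizontal divisors (case (a)) or height-$1$ primes (case (b)) through $\bar{v}_0$ by their tangent directions at this point. Taylor-expanding $a$ in local coordinates around $\bar{v}_0$, for a generic tangent direction $a$ vanishes to order exactly $1$ along the corresponding curve, so $v_2(\bar{a}|_{w'})=1$, coprime to $n$. Moreover, taking $w'$ to be a degree-$1$ closed point (in case (a), a section of $\mathcal{C}_M\to\Spec\mathbb{C}[\![t]\!]$ through $\bar{v}_0$, so $\kappa(w')=k$) yields $\kappa(w')=\kappa(w)$ automatically, as then $w$ is also a degree-$1$ closed point of $C$ (the finite morphism $C_M\to C$ preserving the base field). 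A one-parameter family of tangent directions through $\bar{v}_0$ thus produces infinitely many distinct places $w'$ and hence infinitely many distinct $w\in C^{(1)}$ with $D_w\sim D_v$.

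The main obstacle is making this deformation rigorous. In case (a), algebraization of formal germs through $\bar{v}_0$ to honest horizontal divisors of $\mathcal{C}_M$ follows from Hensel's lemma on a smooth or semi-stable model. Case (b) is more delicate: the analogous construction of height-$1$ primes of $A_M$ with prescribed tangent directions must be carried out intrinsically in the $2$-dimensional complete local ring $A_M$, without the benefit of a $1$-dimensional base, and will likely require tools such as Weierstrass preparation or Artin approximation. A final technical point is the case when $L$ is not geometric over $K$ (i.e.\ $L\cap\bar{k}\supsetneq k$, or the analogue in case (b)), which will require a preliminary base change to $L\cap\bar{k}$ before the geometric construction applies.
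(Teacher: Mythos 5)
The decisive step of your argument is the claim that, after specializing to the point $\bar v_0$ of the two-dimensional model, a \emph{generic} tangent direction through $\bar v_0$ yields a curve along which $a$ vanishes to order exactly $1$. This is false in general, and it is exactly where the difficulty of the theorem lies. For a curve $Z$ through $\bar v_0$ not contained in the support of $\mathrm{div}(a)$, the quantity $v_2(\bar a|_{w'})$ is the local intersection number of $Z$ with $\mathrm{div}(a)$ at $\bar v_0$; for a generic tangent direction this equals the multiplicity of $\mathrm{div}(a)$ at $\bar v_0$, which can be any integer $\geq 1$ and need not be coprime to $n$, even though the intersection number along the special curve coming from $v_M$ is. Concretely, take $K=\C(\!(t)\!)(u)$, $n=2$, $a=u^2-t^3$, and $v$ the place $u=0$: then $a(v)=-t^3$ has odd valuation, so $v$ is inert in $L=K(\sqrt a)$; but any point $w$ given by $u=c_1t+\cdots$ with $c_1\neq 0$ (a generic tangent direction at $\bar v_0=(u,t)$) has $a(w)=c_1^2t^2+\cdots$ of even valuation, hence splits completely, so $D_w$ is \emph{not} conjugate to $D_v$. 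The places that do work are those with $u(w)$ of valuation $\geq 2$, i.e.\ curves highly tangent to the closure of $v$ --- equivalently, points $t$-adically close to $v$. So ``generic deformation'' must be replaced by ``adically small deformation'', which is precisely what the paper's proof does: it puts the valuation topology on $C(k(v))$ and uses an implicit function theorem (\'etale maps induce local homeomorphisms on points) to show that ``there is a degree-one place of $L^{D_v}$ above $w$'' and ``that place is inert in $L/L^{D_v}$'' are \emph{open} conditions near $v$, with non-empty opens infinite. In case (b) this requires the implicit function theorem for the non-finite-type scheme $\Spec A[x^{-1}]$, which the paper develops and which your sketch leaves entirely open (``will likely require Weierstrass preparation or Artin approximation'').

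Two smaller points. Your stated equivalence is not quite an ``iff'': to force $D_W=D_v$ you need $w'/w$ unramified as well as $\kappa(w')=\kappa(w)$ (if $e(w'/w)>1$ then $|D_W|=n\,e(w'/w)>n$); this is harmless since ramification happens at only finitely many places, but it should be said. Also, the preliminary reduction via the cyclicity of $D_v$ and Kummer theory is fine (it uses that $v$ is unramified and $\Gamma_{\kappa(v)}\cong\widehat{\Z}$), but it is not needed: the paper works directly with the fixed field of $D_v$ for an arbitrary finite Galois $L/K$, which is both simpler and what generalizes to the finer version with Frobenius-type substitutes.
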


    Note that, contrary to the classical Chebotarev's density Theorem over number fields, this result does not involve any Frobenius element. A finer version of Theorem \ref{thm reocurrence intro}, that provides a suitable replacement of Frobenius elements, is given in Section \ref{sec finer reocurrence} (see Theorem \ref{thm finer reocurrence}). \\

    At the end of the article, we provide a concrete application of our Reocurrence Theorem to the study of rational points on homogeneous spaces with finite stabilizers over the field $K$. Given a finite subset $S$ of $C^{(1)}$ and a finite $K$-group $F$, we say that $F$ has approximation away from $S$ if, for every embedding of $F$ into a semi-simple simply connected linear algebraic $K$-group $G$, the set $G/F(K)$ is dense in $\prod_{v \in C^{(1)}\setminus S} G/F(K_v)$. We denote by $\Bad_F$ the set of places $v \in C^{(1)}$ such that the minimal extension splitting $F$ ramifies at $v$. We then prove the following result:

    \begin{thmx}[Theorem \ref{thm weak approx split extension}] \label{thm weak approx intro}
        Let $K$ be a field of type (a) or (b) as above.  Let $E$ be a finite $K$-group that fits in a split exact sequence of finite $K$-groups
        \[
            1 \to A \to E \to F \to 1
        \]
        where $A$ is abelian and $F$ has approximation away from $\Bad_F$. Then, $E$ has approximation away from $\Bad_E$. 
    \end{thmx}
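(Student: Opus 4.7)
Fix an embedding $E \hookrightarrow G$ into a semi-simple simply connected $K$-group $G$. Using the splitting $s\colon F \to E$, regard $F$ as a subgroup of $E$, hence of $G$. Given a finite subset $S \subseteq C^{(1)} \setminus \Bad_E$, local points $x_v \in (G/E)(K_v)$ for $v \in S$, and open neighbourhoods $U_v \ni x_v$, we must produce $x \in (G/E)(K)$ with $x \in U_v$ for every $v \in S$. Two natural $G$-equivariant morphisms are at play: the quotient $G/A \to G/E$, which is an $F$-torsor since $A \triangleleft E$ with quotient $F$, and the étale cover $G/F \to G/E$ induced by $F \subseteq E$.

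\textbf{Plan.} My plan is to solve the problem in two stages along the split sequence $1 \to A \to E \to F \to 1$. Via the long exact sequences associated with the $E$- and $F$-torsors $G \to G/E$ and $G \to G/F$, together with the vanishing $H^1(K_v,G) = 1$ furnished by Serre's Conjecture~II over $K$, every local point $x_v$ yields a class $[x_v] \in H^1(K_v,E)$ which projects to a class $c_v \in H^1(K_v,F)$ through the quotient $E \to F$. First, applying the approximation hypothesis on $F$ to the embedding $F \subseteq G$ and away from $\Bad_F \subseteq \Bad_E$, I would find a global point $y \in (G/F)(K)$ whose associated $F$-cohomology class $[y] \in H^1(K,F)$ restricts to $c_v$ for every $v \in S$, using that approximation at the level of points implies the corresponding matching of the locally constant cohomology classes. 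Through the splitting, $[y]$ lifts canonically to a class $\widetilde{[y]} \in H^1(K,E)$, and twisting the torsor $G \to G/E$ by $\widetilde{[y]}$ reduces the problem to one in which all residual local classes lie in the \emph{abelian} group $H^1(K_v, {}_y A)$, where ${}_y A$ denotes the $F$-twist of $A$ by $y$. I would then invoke the Reocurrence Theorem (Theorem \ref{thm reocurrence intro}), combined with a Poitou--Tate-style duality argument for the finite abelian $K$-group ${}_y A$, to construct a global class $\alpha \in H^1(K,{}_y A)$ with the prescribed local restrictions at places of $S$. Combining $\alpha$ and $\widetilde{[y]}$ in $H^1(K,E)$, together with the surjectivity of $(G/E)(K) \to H^1(K,E)$ guaranteed by Serre~II and weak approximation for the connected group $G$, would yield the desired global rational point approximating $(x_v)_{v \in S}$.

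\textbf{Main obstacle.} The crux of the argument is the abelian approximation for ${}_y A$ away from $\Bad_E$, which is not a classical fact over fields of type (a) or (b) precisely because Chebotarev's density theorem fails in this setting. The Reocurrence Theorem is exactly what substitutes for Chebotarev here, and the key technical step will be to use it --- presumably in its finer form from Theorem \ref{thm finer reocurrence}, which tracks Frobenius-like elements --- to annihilate the reciprocity obstruction for ${}_y A$ by producing auxiliary unramified places with prescribed decomposition group. A secondary but necessary verification is the inclusion $\Bad_{{}_y A} \subseteq \Bad_E$, which should follow because the twist ${}_y A$ is split by the minimal extension splitting $E$. Finally, one must be careful to combine the cohomological matching with weak approximation for $G$ so as to pass from $H^1$-level agreement to genuine point approximation inside each $U_v$.
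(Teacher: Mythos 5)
Your overall skeleton (project the local classes to $F$, use the hypothesis on $F$ to find a global $F$-class, lift it through the splitting, twist, and reduce to an abelian problem for the twisted group) is the same as Steps 1--2 of the paper's proof. But the point where you place all the difficulty --- ``abelian approximation for ${}_yA$ away from $\Bad_E$'' --- is handled incorrectly, and this is precisely the crux. First, your claimed reduction $\Bad_{{}_yA}\subseteq \Bad_E$ is false in general: the twisted module ${}_cA$ (and its Cartier dual) is split not by the minimal extension $L$ splitting $E$, but by the extension $L'$ cut out by the kernel of the chosen cocycle $c$ restricted to $\Gamma_L$. That extension depends on the global class you constructed and can ramify at places far outside $\Bad_E$ --- including at places of $S$ itself --- so there is no containment of bad sets to exploit. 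Second, even with such a containment there is no off-the-shelf ``abelian groups approximate away from their ramification locus'' statement to quote over these fields; by Lemma \ref{lemma weak approximation and sha} the needed approximation for ${}_cA$ in $S$ is \emph{equivalent} to $\Sha^1_S(K,\widehat{{}_cA})=\Sha^1(K,\widehat{{}_cA})$, which is exactly what must be proved, and a vague appeal to Reocurrence plus ``Poitou--Tate-style duality'' does not produce it: Reocurrence only propagates decomposition groups of places you already control, it does not by itself kill classes supported on $S$.

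The missing idea is that the auxiliary local conditions must be built into the global $F$-class \emph{before} twisting, because $L'$, hence the module ${}_cA$, is only determined afterwards. In the paper, for each $v\in S$ one first uses Theorem \ref{thm finer reocurrence} (via Lemma \ref{lemma reocurence cohomology}) to choose a companion place $v'\notin S\cup\Bad_E$ and an isomorphism $\rho^*_{v,v'}\colon \H^1(K_v,F)\to\H^1(K_{v'},F)$, and the global class $\gamma$ is required to match the prescribed classes at $S$ \emph{and} their transports at the companion set $S'$; moreover one adds conditions at totally split places (Proposition \ref{prop existence totally split}) to force the cocycle $c$ to be surjective on $\Gamma_L$. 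These extra constraints are what make Lemma \ref{lemma cocylces fun calculation} applicable, yielding that the decomposition groups of $v$ and $v'$ are conjugate in $\Gal(L'/K)$; combined with Proposition \ref{prop Sha in the image of inf} (every class in $\Sha^1_S(K,\widehat{{}_cA})$ comes by inflation from $\widetilde{L'}/K$), triviality at $v'\notin S$ forces triviality at $v$, giving $\Sha^1_S=\Sha^1$ for $\widehat{{}_cA}$ and hence the abelian approximation at $S$. In your proposal the class $[y]$ is only matched at $S$, so none of this machinery can be applied after the twist, and the final abelian step has no proof. Your last step is also slightly off in that weak approximation for $G$ and surjectivity of $(G/E)(K)\to\H^1(K,E)$ are not needed once one works cohomologically, by Proposition \ref{prop weak approx for homosp depends only on stab}; but that is a minor point compared with the gap above.
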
  

    The proof of this Theorem follows the lines of a similar result for number fields settled in \cite[Theorem 4.1]{Lu-Nef-Dem}. 

    \subsection*{Organization of the article}
        In section \ref{Sec implicit function} we state one of the main ingredients to prove our main result, the implicit function theorem. For fields of type (a) this is a result that can be found in the literature. Most of this section is devoted to proving a variant of the implicit function theorem for certain schemes over a field that are not of finite-type (Proposition \ref{prop etale are local homeo non finite type}). This variant is needed to prove the Reocurrence Theorem for fields of type (b). \par 
        Section \ref{sec reocurrence} is devoted to the proof our main result (Theorem \ref{thm reocurrence}) corresponding to Theorem \ref{thm reocurrence intro} in the introduction. In subsection \ref{sec finer reocurrence} we give a refinement of Theorem \ref{thm reocurrence} that will be useful for applications to weak approximation. \par 

        In section \ref{section very weak approximation} we adapt the arguments in \cite{Lu-Nef-Dem} to obtain Theorem \ref{thm weak approx split extension} which corresponds to Theorem \ref{thm weak approx intro} in the introduction. \par

        Finally, in the appendix we give a proof of the analogous statement to Theorem \ref{thm reocurrence} for finite field extensions of $\C(x,y)^h$ (Theorem \ref{thm reocurrence henselian}), i.e. the fraction field of the henselianization of $\C[x,y]$ at the origin. Even though it is possible to adapt the methods used for finite field extensions of $\C(\! (x,y)\!)$ to prove Theorem \ref{thm reocurrence henselian}, this alternative proof avoids working with non-finite type schemes and builds upon the case of function fields of curves above $\C(t)^h$.
    \subsection*{Notation and conventions}
        \noindent\textbf{Fields and Galois cohomology:} \hspace*{1ex} Let $K$ be a field. We denote by $\Gamma_K$ the absolute Galois group of $K$ and by $\mu$ the $\Gamma_K$-module of roots of unity in an algebraic closure of $K$. By a finite $K$-group we mean a finite group $F$ together with a continuous action of $\Gamma_K$. For a finite abelian $K$-group $F$, we denote by $\H^i(K,F)$ its Galois cohomology. When $F$ is not necessarily abelian, we still denote by $\H^i(K,F)$ its non-abelian cohomology sets for $i=0,1$ as defined in \cite[Chapter 3]{serre1979galois}. We say that a finite field extension $L/K$ \textit{splits} a $K$-group $F$ if the absolute Galois group of $L$ acts trivially on $F$.  \vspace{1ex}\par
    \noindent\textbf{Schemes:} \hspace*{1ex}Let $K$ be a field. By $K$-variety we mean a separated $K$-scheme of finite type. Given a $K$-scheme $X$, we denote the residue field of $X$ at some fixed point $x\in X$ by $k(x)$. Moreover, for a field extension $L/K$, we denote $X \otimes_K L$ the base change of $X$ to $L$. Finally, given $L/K$ a finite field extension and $Y$ an $L$-variety, we denote by $\mathrm{R}_{L/K}Y$ the Weil restriction of $Y$ to $K$.
    
    \subsection*{Acknowledgements}
    I am deeply thankful to Diego Izquierdo, my PhD advisor, for all the explanations, observations and support. I would also like to sincerely thank Giancarlo Lucchini Arteche for giving me the opportunity to work in such an interesting area and for all the help he gave me. Writing this article would not have been possible if it were not for Diego and Giancarlo.

    \section{An implicit function theorem over Laurent series fields}\label{Sec implicit function}
    
    Let $k$ be a Hausdorff topological field. Recall that, as in \cite[app.III]{Weil1962foundations} or \cite[p. 256]{KS1983UnramifiedClassField}, one can construct a topology on the set of rational points of a $k$-variety $S$ as follows:
    \begin{enumerate}
        \item Choose an affine open cover $\{U_i\}_{i =1}^n$ of $S$, so that $S(k) = \bigcup_{i=1}^n U_i(k)$.
        \item Every section $f \in \Gamma(U_i,\mathcal{O}_{S})$ induces a function $f_*:U_i(k) \to k$. We endow $U_i(k)$ with the initial topology with respect to all functions induced by elements of $\Gamma(U_i,\mathcal{O}_S)$. This is the coarsest topology such that every function induced by some element in $\Gamma(U_i,\mathcal{O}_{S})$ is continuous.
        \item One can check that these topologies agree on the intersections of the $U_i(k)$ and, hence, one can glue them to get a topology on $S(k)$.
    \end{enumerate}
    Let $\ell/k$ be a finite extension of topological fields, meaning that $\ell$ is a Hausdorff topological field such that $k$ is closed in $\ell$ and the topology of $k$ is induced by that of $\ell$. For a $k$-variety $S$, we will denote by $S(\ell)$ the previous construction applied to $S \otimes_k \ell$ viewed as an $\ell$-variety. This construction turns out to have many nice properties. Here are some of them that will be used later in the article:
\begin{enumerate}
        \item[(T1)] \label{T1} For every morphism $f:X \to Y$ of $k$-schemes, the induced map $f_k: X(k) \to Y(k)$ is continuous.
        \item[(T2)] \label{T2} If $i: Z \hookrightarrow X$ is a closed (resp. open) immersion, the induced map $i_k: Z(k) \to X(k)$ is also a closed (resp. open) immersion. 
        \item[(T3)] \label{T3} If $\ell$ is a finite extension of the topological field $k$, then $S(k)$ is closed in $S(\ell)$ and its topology is induced by that of $S(\ell)$.
    \end{enumerate}
    Properties (T1) and (T2) can be found in \cite[Section 3]{GGM-B2014fibrepincipaux}. Since we could not find a reference for (T3) we will give a short proof.
    \begin{proof}[Proof of (T3)]
        First of all note that $S(k)$ is naturally a subset of $S(\ell)$. Since being closed is a local property, we may assume without loss of generality that $S = \Spec A$ is an affine scheme. Let us prove that the topology of $S(k)$ is induced by the one of $S(\ell)$. In order to do that, it is enough to prove that for every $g \in A\otimes_{k} \ell$ the restriction of the induced function $g_*:S(\ell) \to \ell$ to $S(k)$ is continuous. Indeed, it is easy to check that the subspace topology is the initial topology with respect to these functions. Every element $g \in A\otimes_k \ell$ can be written as $g = \sum_{i=1}^n f_i \otimes \alpha_i$ with $f_i \in A$ and $\alpha_i \in \ell$. Note that the functions $f_{i*}:S(k) \to k$ induced by $f_i$ are continuous for every $i$ by definition of the topology of $S(k)$ and that for every $x \in S(k)$ the evaluation of $g_*$ in $x$ is given by $g_*(x) = \sum_{i=1}^n f_{i*}(x) \alpha_i$ . Then, by continuity of the multiplication and sum of $\ell$, we can deduce that the restriction of $g$ to $S(k)$ is continuous. \par
        It remains to prove that $S(k)$ is closed. This can be done by checking the following equality
        \[
            S(k) = \bigcap_{f \in A} (f \otimes 1)^{-1}_*(k).
        \]
        The inclusion $S(k) \subseteq \bigcap_{f \in A} (f \otimes 1)^{-1}_*(k)$ is trivial. Let $x \in \bigcap_{f \in A} (f \otimes 1)^{-1}_*(k)$. Since $S$ is affine, $x$ corresponds to a morphism of $\ell$-algebras $x^*:A\otimes_k \ell \to \ell$. Moreover, the image of $A$ by $x^*$ is contained in $k$ because $x$ is in $\bigcap_{f \in A} (f \otimes 1)^{-1}_*(k)$,. We conclude that $x$ lies in $S(k)$.
    \end{proof}
    
    A final important property of the previous topology is the implicit function theorem, which we recall here:

    \begin{prop} \label{prop val henselian is top henselian}
        Let $k$ be a valued henselian field and $f:Y \to X$ an \'etale morphism of varieties over $k$. Then the continuous map $f_k: Y(k) \to X(k)$ is a local homeomorphism.
    \end{prop}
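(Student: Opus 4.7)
The plan is to reduce, via the local structure theorem for étale morphisms, to the case of a standard étale map and then apply Hensel's lemma in its quantitative form.

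First, I would exploit the fact that local homeomorphism is a local property on the source, together with property (T2), to pass to a neighborhood of any given point $y \in Y(k)$ of image $x = f(y)$. By the standard structure theorem for étale morphisms, after shrinking $Y$ and $X$ we may assume $X = \Spec A$ is affine and $Y$ is an open subscheme of $\Spec B$ with $B = A[T]/(P(T))$, where $P$ is a monic polynomial whose derivative $P'(T)$ becomes invertible in $B$. A $k$-point of $Y$ is then the data of a $k$-point $x : A \to k$ of $X$ together with an element $t_0 \in k$ such that $P_x(t_0) = 0$, where $P_x \in k[T]$ is the specialization of $P$ along $x$, and $P_x'(t_0) \ne 0$ (indeed $P_x'(t_0)$ is a unit).

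Second, I would build the local inverse using Hensel's lemma. Let $v$ be the valuation on $k$; after clearing denominators and rescaling we may put ourselves in a setting where $P_x \in \mathcal{O}_v[T]$, $t_0 \in \mathcal{O}_v$, and $P_x'(t_0)$ has some valuation $m \geq 0$. The henselian hypothesis then yields an $N$ (depending only on $m$) such that, for any polynomial $\widetilde P \in \mathcal{O}_v[T]$ with $v(\widetilde P - P_x)$ coefficient-wise greater than $N$, there exists a unique $\widetilde t \in \mathcal{O}_v$ with $\widetilde P(\widetilde t) = 0$ and $v(\widetilde t - t_0) > m$. Applied to the specializations $\widetilde P = P_{x'}$ for $k$-points $x'$ of $X$ sufficiently close to $x$, this produces a unique lift $\sigma(x') \in Y(k)$ of $x'$ lying close to $y$, on some neighborhood $V$ of $x$ in $X(k)$.

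Third, I would check that $\sigma$ is continuous, so that together with $f_k \circ \sigma = \id$ and $\sigma \circ f_k = \id$ on a small enough neighborhood of $y$ (the latter by the uniqueness part of Hensel) this gives the local homeomorphism. Continuity is the content of the quantitative form of Hensel's lemma: by reapplying the approximation above inside arbitrarily small balls around $t_0$, one sees that the map sending $x'$ to the unique root $\widetilde t$ is continuous from $V$ (with the topology induced by the initial topology construction on $X(k)$, under which the coefficients of $P_{x'}$ depend continuously on $x'$) to $k$, and hence into $Y(k)$ by the very definition of its topology.

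The main obstacle will be the bookkeeping around step three: one must verify carefully that the topology on $X(k)$ and $Y(k)$ coming from the abstract initial-topology construction is the one with respect to which Hensel's lemma provides continuous dependence of the root on the coefficients. Once this is settled, uniqueness in Hensel's lemma immediately gives injectivity of $\sigma$ and the equality $\sigma \circ f_k = \id$ on a small enough neighborhood, completing the argument.
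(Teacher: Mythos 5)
Your argument is correct and is essentially the same as the one the paper relies on: the statement is not proved in the text but delegated to \cite[Proposition 3.1.4]{GGM-B2014fibrepincipaux}, whose proof is precisely this reduction to a standard \'etale morphism followed by the quantitative form of Hensel's lemma producing a continuous local section, and the paper replays the same strategy itself in Proposition \ref{prop implicit function hatA}. The only points needing cosmetic care are that over a general henselian valued field the value group need not be $\Z$, so the Hensel estimate should be phrased multiplicatively rather than with integers $m$ and $N$, and that continuity of the section at points of $V$ other than $x$ is obtained by repeating the same estimate at those points.
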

    
    A proof of this statement can be found in \cite[Proposition 3.1.4]{GGM-B2014fibrepincipaux}. \\
    
    In the sequel, we will need to construct similar topologies with analogous properties for some $k$-schemes that are not of finite type. To do so, let us fix $S$ a Noetherian $k$-scheme and define its set of rational points $S(k)$ as the set of sections of the structural morphism $S \rightarrow \mathrm{Spec}\, k$. By proceeding exactly in the same way as before, one can endow $S(k)$ with a topology satisfying properties (T1) to (T3). \\
   
     As for the implicit function theorem, we are going to prove it in a particular case that will be useful for us later in the article. We consider the following situation. Let $k_0$ be a perfect field. Set $k:=k_0(\! (x)\! )$ and 
     \[
     \hat{\mathbf{A}}^n := \Spec k_0[\![x,\underline{z}]\!][x^{-1}].
     \]
     Note that $\hat{\mathbf{A}}^n$ is naturally a $k$-scheme.
      \begin{lemma}\label{lemma points of hatA}
        Let $\ell / k$ be a finite field extension. Denote by $\mathfrak{m}_{\ell}$ the maximal ideal of the valuation ring of $\ell$. Then there is a natural homeomorphism $\hat{\mathbf{A}}^n(\ell) = \mathfrak{m}_{\ell}^n$.
    \end{lemma}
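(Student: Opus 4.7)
The plan is to identify $\hat{\mathbf{A}}^n(\ell)$ with $\mathfrak{m}_\ell^n$ via the evaluation map $\psi \mapsto (\psi(z_1), \dots, \psi(z_n))$, constructing its inverse by substitution. Given $\underline{\alpha} = (\alpha_1, \dots, \alpha_n) \in \mathfrak{m}_\ell^n$, I would define $\psi_{\underline{\alpha}}$ on $k_0[\![x, \underline{z}]\!]$ by sending $\sum a_{i_0, I}\, x^{i_0} \underline{z}^I$ (with $a_{i_0, I} \in k_0$) to $\sum a_{i_0, I}\, x^{i_0} \underline{\alpha}^I$ in $\ell$. Letting $e$ denote the ramification index of $\ell/k$, each monomial satisfies $v_\ell(a_{i_0, I}\, x^{i_0} \underline{\alpha}^I) \geq i_0 e + |I|$, so the series converges in the complete field $\ell$; inverting $x$ extends $\psi_{\underline{\alpha}}$ to a $k$-algebra morphism $k_0[\![x, \underline{z}]\!][x^{-1}] \to \ell$.

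The crux is to prove conversely that any $\psi \in \hat{\mathbf{A}}^n(\ell)$ satisfies $\alpha_i := \psi(z_i) \in \mathfrak{m}_\ell$. Suppose, for contradiction, that $v_\ell(\alpha_i) \leq 0$. Since $\ell/k$ is finite, $\alpha_i$ is algebraic over $k$, with monic minimal polynomial $q(T) = T^d + c_{d-1}(x)\, T^{d-1} + \dots + c_0(x) \in k[T]$. If $v_\ell(\alpha_i) = 0$, then $\alpha_i$ is integral over $\mathcal{O}_k = k_0[\![x]\!]$, so all $c_j$ lie in $\mathcal{O}_k$ and $c_0 = \pm N_{k(\alpha_i)/k}(\alpha_i)$ has valuation zero, whence $c_0(0) \neq 0$; thus $q(X, Z)$ is a unit of $k_0[\![X, Z]\!]$, while $\psi$ sends it to $q(x, \alpha_i) = 0$, contradicting that $\psi$ preserves units. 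If $v_\ell(\alpha_i) < 0$, all Galois conjugates of $\alpha_i$ share its valuation, giving $v_x(c_j) \geq (d - j) v_x(\alpha_i) \geq d v_x(\alpha_i) =: -N_0$; then $P(X, Z) := X^{N_0} q(X, Z)$ lies in $k_0[\![X]\!][Z]$ and $P(0, 0)$ equals the leading coefficient of $X^{N_0} c_0(X)$, which is nonzero, so $P$ is again a unit while $\psi(P) = x^{N_0} q(x, \alpha_i) = 0$, contradiction.

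Once $\underline{\alpha} \in \mathfrak{m}_\ell^n$ is established, a standard truncation argument identifies $\psi$ with $\psi_{\underline{\alpha}}$: for $f \in k_0[\![x, \underline{z}]\!]$, decompose $f = \tilde f_N + g_N$ with $\tilde f_N$ a polynomial and $g_N \in (x, \underline{z})^{N+1}$; since $\psi$ sends the generators $x, z_1, \dots, z_n$ of $(x, \underline{z})$ into $\mathfrak{m}_\ell$, one has $\psi(g_N) \in \mathfrak{m}_\ell^{N+1}$, so $\psi(f) \equiv \tilde f_N(x, \underline{\alpha}) \pmod{\mathfrak{m}_\ell^{N+1}}$, and letting $N \to \infty$ yields $\psi(f) = f(x, \underline{\alpha})$. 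The homeomorphism then follows: the topology on $\hat{\mathbf{A}}^n(\ell)$ is initial with respect to the evaluations $\psi \mapsto \psi(f)$, which correspond to $\underline{\alpha} \mapsto f(x, \underline{\alpha})$ under the bijection; the projections $\underline{\alpha} \mapsto \alpha_i$ appear among these, and each $\underline{\alpha} \mapsto f(x, \underline{\alpha})$ is the uniform limit over $\mathfrak{m}_\ell^n$ of polynomial functions, hence continuous for the product topology. I expect the main obstacle to be the crux step above: without any continuity hypothesis on $\psi$, a purely ring-theoretic argument must rule out pathological morphisms, and the key leverage is the finiteness of $\ell/k$ together with the valuative behaviour of the minimal polynomial of $\alpha_i$.
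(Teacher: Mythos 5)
Your overall strategy coincides with the paper's (identify a point with the tuple $(\psi(z_1),\dots,\psi(z_n))$, show these images lie in $\mathfrak{m}_\ell$, then check continuity in both directions), and your minimal-polynomial/norm-valuation argument for the crux is correct and genuinely different from the paper's device: the paper takes an \emph{arbitrary} element $p$ of the maximal ideal $(x,\underline{z})$ with $\varphi(p)=f\notin\mathfrak{m}$ and derives a contradiction from the geometric series $g=\sum_{j\ge 0}(p/f)^j$, which satisfies $g=1+(p/f)g$ and hence $\varphi(g)=1+\varphi(g)$. However, as written your proof has a genuine gap in the truncation step. From $g_N\in(x,\underline{z})^{N+1}$ and the fact that the \emph{generators} $x,z_1,\dots,z_n$ map into $\mathfrak{m}_\ell$ you conclude $\psi(g_N)\in\mathfrak{m}_\ell^{N+1}$. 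This does not follow: an element of the ideal $(x,\underline{z})^{N+1}$ is a finite sum $\sum_m m\,h_m$ over monomials $m$ of degree $N+1$ with coefficients $h_m\in k_0[\![x,\underline{z}]\!]$, so $\psi(g_N)=\sum_m\psi(m)\,\psi(h_m)$, and at this point of your argument nothing bounds $v_\ell(\psi(h_m))$ from below --- you have controlled $\psi$ only on $x$ and the $z_i$, not on general ring elements, and the whole point of the lemma is that $\psi$ is not yet known to be the substitution map, nor to be continuous or valuation-compatible. So the estimate $\psi(g_N)\in\mathfrak{m}_\ell^{N+1}$, and with it the identification $\psi=\psi_{\underline{\alpha}}$, is unjustified as stated.

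The gap is repairable with your own tools: your argument uses nothing about $z_i$ beyond its membership in the maximal ideal (only the vanishing of the constant term matters when you check that $q$, resp.\ $X^{N_0}q$, is a unit of $k_0[\![x,\underline{z}]\!]$), so run it for an arbitrary $p\in(x,\underline{z})$: if $\beta=\psi(p)$ had $v_\ell(\beta)\le 0$, then $x^{N_0}\bigl(p^d+c_{d-1}(x)p^{d-1}+\dots+c_0(x)\bigr)$ is a unit of $k_0[\![x,\underline{z}]\!]$ killed by $\psi$, a contradiction. Hence $\psi\bigl((x,\underline{z})\bigr)\subseteq\mathfrak{m}_\ell$, and since $k_0[\![x,\underline{z}]\!]=k_0\oplus(x,\underline{z})$ with $k_0\subseteq\mathcal{O}_\ell$, the image of the whole ring lies in $\mathcal{O}_\ell$; then indeed $\psi\bigl((x,\underline{z})^{N+1}\bigr)\subseteq\mathfrak{m}_\ell^{N+1}$ and your truncation argument closes. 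This uniformity over all of $(x,\underline{z})$ is exactly what the paper's geometric-series trick delivers in one stroke (it is what makes $\varphi$ continuous, i.e.\ $\varphi((x,\underline{z})^r)\subseteq\mathfrak{m}^r$, and hence determined by the images of the $z_i$). Two minor points: the paper first reduces to $\ell=k$ by base change, while your direct treatment of general $\ell$ is fine; and in your second case one should take $N_0=-d\,v_x(\alpha_i)$, which is an integer because it equals $-v_x(c_0)$.
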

    \begin{proof}
       We may assume without loss of generality that $\ell = k$. Denote by $\mathfrak{m}$ the maximal ideal of $k_0[\![x]\!]$. Note that an element of $\hat{\mathbf{A}}^n(k)$ corresponds to a morphism of $k_0[\![x]\!]$-algebras $k_0 [\! [x, \underline{z}]\!][x^{-1}] \to k$. Since $x$ is invertible in $k = k_0(\!(x)\!)$, such a morphism is the same information as a morphism of $k_0[\![x]\!]$-algebras $\varphi :k_0 [\! [x, \underline{z}]\!] \to k$. We claim that such a morphism is the same data as $n$ elements $f_1,\dots,f_n \in \mathfrak{m}$. To do so it is enough to  prove that $f_i := \varphi(z_i)$ is in $\mathfrak{m}$ for each $i$ and that the morphism is continuous, in particular, it is determined by the images of the $z_i$. \par 

       Let $p$ be an element of the maximal ideal of $k[\![x,\underline{z}]\!]$, that is $(x,\underline{z})$, and set $f:= \varphi(p)$. Suppose that $f \not \in \mathfrak{m}$ and consider the series
        \[
        g= \sum_{j\geq 0} (\frac{1}{f} p)^j.
        \]
        The series $g$ is in $k_0[\![x,\underline{z}]\!]$ because $\frac{1}{f}\in k_0[\![x]\!]$ and satisfies $g = 1 + \frac{p}{f} g$. This implies that $\varphi(g) = 1 + \varphi(g)$ which is a contradiction. We deduce that the image of $\varphi$ is in $k_0[\! [x]\!]$. Moreover, $\varphi$ is continuous because for every $r\in \N$ it maps the $r$-th power of the ideal $(x,\underline{z})$ into the $r$-th power of $\mathfrak{m}$. Thus the morphism $\varphi$ is determined by a choice of image for each $z_i$ and every choice of images $f_1,\dots,f_n \in \mathfrak{m}$ defines a morphism. \par 

        Up until now we have established a bijection $g:\hat{\mathbf{A}}^n(k) \to \mathfrak{m}^n$. It remains to prove that $g$ and $g^{-1}$ are continuous. To prove that $g$ is continuous it is enough to check that the composition of $g$ with the $i$-th projection $\pi_i:\mathfrak{m}^n \to \mathfrak{m}$ is continuous. The composition $\pi_i \circ g$ is indeed continuous because it is the function induced by the global section $z_i \in k_0[\![x,\underline{z} ]\!][x^{-1}]$. \par 
        
        Since the topology of $\hat{\mathbf{A}}^n(k)$ is initial with respect to the functions induced by every element of $k_0[\![x,\underline{z} ]\!][x^{-1}]$, in order to prove that $g^{-1}$ is continuous it is enough to prove that for every $p \in k_0[\![x,\underline{z} ]\!][x^{-1}]$ the composition $p_* \circ g^{-1}$ is continuous. The function $p_* \circ g ^{-1}$ is continuous because it is the evaluation of the series $p$, that is to say for every $a_1,\dots ,a_n \in \mathfrak{m}$ we have $p_*\circ g^{-1}(a_1,\dots ,a _n) = p(x,a_1,\dots ,a_n)$. We conclude because evaluation of a formal series is continuous with respect to the topology of $k$.
    \end{proof}
    \begin{prop}\label{prop implicit function hatA}
        Let $X$ and $Y$ be $k$-schemes and $f: Y \to X$ a morphism. Assume $X$ is a closed sub-$k$-scheme of $\hat{\mathbf{A}}^n$ and that $f$ is \'etale at a point $v \in Y$ such that $k(v) = k$. Then the map $f_k: Y(k) \to X(k)$ is a local homeomorphism at $v$.
    \end{prop}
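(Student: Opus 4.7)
The plan is to mimic the proof of the classical implicit function theorem (Proposition \ref{prop val henselian is top henselian}) by reducing to the standard étale case and then applying Hensel's lemma in the valued henselian field $k = k_0(\!(x)\!)$. The subtlety is that $X$ is not of finite type over $k$, so Proposition \ref{prop val henselian is top henselian} does not apply directly; one must replay its argument in the non-finite type setting.

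First I would shrink $Y$ to an open neighborhood of $v$ on which $f$ is étale, which is harmless by property (T2). Then I invoke the local structure theorem for étale morphisms (EGA IV, 18.4.6), which is a purely local statement and does not require the base to be of finite type: after further shrinking $Y$ and replacing $X$ by a suitable affine open neighborhood $\Spec B$ of $w := f(v)$, I may assume $Y = \Spec(B[T]/(P(T)))_{P'(T)}$ for some monic $P \in B[T]$. In these coordinates, the rational point $v$ corresponds to a pair $(w, t_0)$ where $t_0 \in k$ satisfies $P_w(t_0) = 0$ and $P'_w(t_0) \neq 0$; here $P_w \in k[T]$ is obtained from $P$ by applying the evaluation map $B \to k$ attached to $w$ to each coefficient.

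Next I would construct a continuous section $\sigma$ of $f_k$ defined on a neighborhood of $w$ in $X(k)$. For any $w' \in X(k)$ sufficiently close to $w$, the coefficients of $P_{w'}$ are close to those of $P_w$: this is built into the definition of the topology on $X(k)$, since each coefficient of $P$ is an element of $B$ that induces a continuous function on $X(k)$. Consequently $|P_{w'}(t_0)|$ is small while $|P'_{w'}(t_0)|$ stays close to the nonzero value $|P'_w(t_0)|$, so Hensel's lemma for the valued henselian field $k$ yields a unique $t' = t'(w') \in k$ near $t_0$ with $P_{w'}(t') = 0$. Since $P'_{w'}(t')$ remains nonzero, the pair $(w', t')$ defines a point $\sigma(w') \in Y(k)$ lying over $w'$.

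It then remains to verify that $\sigma$ is continuous and that $\sigma \circ f_k = \id$ on a neighborhood of $v$. Continuity of $\sigma$ follows from the Newton iteration description of the Hensel lift: $t'$ is the limit of the Newton sequence starting at $t_0$, each Newton step is a rational function of the coefficients of $P_{w'}$ and of the previous iterate, and its distance to $t_0$ is controlled by $|P_{w'}(t_0)|$, itself a continuous function of $w'$. The identity $\sigma \circ f_k = \id$ near $v$ then follows from the uniqueness part of Hensel's lemma. I expect the main technical point to be tracking the topologies carefully — in particular, checking that the standard étale presentation is compatible with the subspace topology inherited from $\hat{\mathbf{A}}^n(k) = \mathfrak{m}^n$ — but everything is controlled by continuity of the evaluation maps on $B$, so no genuine obstruction should arise.
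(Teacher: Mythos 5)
Your argument is correct, and it shares the paper's skeleton --- reduce to a standard \'etale presentation (you cite EGA IV 18.4.6, the paper cites \cite[\S 2.3 Proposition 3]{NeronModels}, which give the same local form) and conclude by Hensel's lemma plus a Newton-type continuity estimate as in \cite[Proposition 3.1.4]{GGM-B2014fibrepincipaux} --- but the middle of the proof is organized differently. The paper exploits the hypothesis that $X$ is closed in $\hat{\mathbf{A}}^n$: it lifts the standard \'etale equation $P$ to $\tilde{P}$ over $k_0[\![x,\underline{z}]\!][x^{-1}]$, reduces via (T2) to the case $X=\hat{\mathbf{A}}^n$ and $Y\subseteq\hat{\mathbf{A}}^{n+1}$, translates $v$ to the origin and rescales $\tilde{P}$ so that all coefficients lie in $\mathfrak{m}$; after this normalization Hensel's lemma over $k_0[\![x]\!]$ applies uniformly and, via Lemma \ref{lemma points of hatA}, yields a bijection $Y(k)\to\hat{\mathbf{A}}^n(k)=\mathfrak{m}^n$ whose inverse is then shown continuous. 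You instead keep $X$ as it is and run Hensel fiberwise on the specialized polynomials $P_{w'}\in k[T]$, using only continuity of the coefficient functions on $X(k)$; the embedding into $\hat{\mathbf{A}}^n$ plays no role in your argument, so (granting that properties (T1)--(T2) hold in the non-finite-type setting, as the paper asserts) your proof applies to any Noetherian $k$-scheme $X$, a mild gain in generality, at the price of having to carry the quantitative Hensel/Newton estimates with parameters yourself rather than reading them off a normalized equation. One point you pass over quickly: neither the coefficients of $P_{w'}$ nor $t_0$ need lie in the valuation ring of $k$, so before invoking Hensel you must translate and rescale (substitute $T=t_0+S$ and multiply by a fixed power of $x$, uniformly in $w'$ near $w$, which is possible since the coefficients stay in a bounded set) --- this is exactly what the paper's substitution $\frac{1}{\alpha}\tilde{P}(\alpha\underline{z},\alpha T)$ accomplishes; it is routine, so I regard it as a glossed detail rather than a gap, and likewise your Newton-iteration argument for continuity of the section should be phrased as uniform convergence on a neighbourhood (or as the standard estimate of \cite[Proposition 3.1.4]{GGM-B2014fibrepincipaux}) to give continuity at all nearby points and not only at $w$.
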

    \begin{proof}
        Thanks to \cite[\S 2.3 Proposition 3]{NeronModels} we can assume that $X = \Spec A$ is affine and $Y = \Spec A[T]/P(T)$ where $P$ is a polynomial such that $P'(v) \neq 0$. Let $\Tilde{P}$ be a lifting of $P$ to $k_0[\![x, \underline{z}]\!][x^{-1}]$. By property (T2) we can reduce the proof to the case where $X = \hat{\mathbf{A}}^n$ and $Y = \Spec k_0[\![x, \underline{z}]\!][x^{-1},T]/\Tilde{P}$. Furthermore, we may translate the closed immersion $Y \hookrightarrow \hat{\mathbf{A}}^{n+1}$ to assume that $v$ corresponds to the ideal $(\underline{z},T)$ and its image to $(\underline{z})$. Since $f$ is \'etale at $v$ there is a suitable choice of coordinates such that $\Tilde{P}$ is of the form
        \[
            \Tilde{P}(\underline{z},T) = T + \sum_{|I|+j \geq 2}a_{I,j}\underline{z}^I T^j \in k_0[\![x,\underline{z}]\! ][x^{-1},T].
        \]
        After replacing $\Tilde{P}(\underline{z},T)$ by $\frac{1}{\alpha}\Tilde{P}(\alpha \underline{z},\alpha T)$ for $\alpha \in k$ small enough we can assume that $a_{I,j} \in \mathfrak{m}$ for every $I$ and $j$. Note that for every $\underline{z_0} \in \mathfrak{m}$ the polynomial $\Tilde{P}(\underline{z_0},T)$ has a unique root in $\mathfrak{m}$ by Hensel's lemma. We can rephrase this as $f_{k}$ induces a bijection between $Y(k)$ and $\widehat{\mathbf{A}}^n(k)$. It remains to prove that its inverse is continuous at the origin. The same argument as in the proof of \cite[Proposition 3.1.4]{GGM-B2014fibrepincipaux} works.
    \end{proof}
    \begin{prop}\label{prop etale are local homeo non finite type}
        Let $L/K/k_0 (\!( x,y )\! )$ be a tower of finite extensions. Denote by $A$ and $B$ the integral closures of $k_0[\! [x,y]\!]$ in $K$ and $L$ respectively. Set $X := \Spec A[x^{-1}]$ and $Y := \Spec B[x^{-1}]$. Let $v \in Y$ be such that the natural $k$-morphism $f:Y \to X$ is \'etale at $v$. Then $f$ induces a map $f_v: Y(k(v)) \to X(k(v))$ that is a local homeomorphism at $v$. 
    \end{prop}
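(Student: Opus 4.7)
My plan is to deduce the statement from Proposition~\ref{prop implicit function hatA} after base-changing to $\ell := k(v)$. I would assume $v$ is a closed point of $Y$, so $\ell$ is a finite extension of $k = k_0((x))$ (closed points of $\Spec B[x^{-1}]$ lie above closed points of $\Spec k_0[[x,y]][x^{-1}]$, whose residue fields are finite over $k$), and write $\ell = \ell_0((t))$ for a finite extension $\ell_0/k_0$ and uniformizer $t$. Since $k_0[[x, y]]$ is an excellent complete regular local ring, $A$ and $B$ are finite $k_0[[x, y]]$-modules, hence Noetherian, local, and complete.

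The key algebraic ingredient I would use is the identification
\[
A \otimes_{k_0[[x, y]]} \ell_0[[t, y]] \;=\; A \otimes_{k_0[[x]]} \ell_0[[t]],
\]
whose right-hand side, after inverting $t$, coincides with $A[x^{-1}] \otimes_k \ell$. This follows from the fact that $\ell_0[[t]]$ is a finite free $k_0[[x]]$-module, which makes both $k_0[[x, y]] \otimes_{k_0[[x]]} \ell_0[[t]]$ and $\ell_0[[t, y]]$ free $k_0[[x, y]]$-modules of the same finite rank, with a natural surjection between them. Setting $A' := A \otimes_{k_0[[x, y]]} \ell_0[[t, y]]$ and $B'$ analogously, the rings $A'$ and $B'$ are finite $\ell_0[[t, y]]$-modules with $X \otimes_k \ell = \Spec A'[t^{-1}]$ and $Y \otimes_k \ell = \Spec B'[t^{-1}]$, and the base-changed morphism $f_\ell$ is étale at the $\ell$-rational point $v' \in (Y \otimes_k \ell)(\ell)$ corresponding to $v$.

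Assuming that the residue fields of $A$ and $B$ coincide with $k_0$---automatic when $k_0$ is algebraically closed, in particular in the main application $k_0 = \C$---$A'$ and $B'$ are local with residue field $\ell_0$. Choosing module generators of $A'$ over $\ell_0[[t, y]]$ inside the maximal ideal of $A'$ and using completeness, I obtain a surjection from a power series ring in finitely many variables onto $A'$, and inverting $t$ produces a closed immersion $X \otimes_k \ell \hookrightarrow \hat{\mathbf{A}}^N_\ell$ for some $N$; the analogous construction works for $Y \otimes_k \ell$. Proposition~\ref{prop implicit function hatA} applied over $\ell$ then shows that $(f_\ell)_\ell : (Y \otimes_k \ell)(\ell) \to (X \otimes_k \ell)(\ell)$ is a local homeomorphism at $v'$. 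Composing with the canonical homeomorphisms $(Y \otimes_k \ell)(\ell) = Y(k(v))$ and $(X \otimes_k \ell)(\ell) = X(k(v))$, compatibly with $f_v$, yields the desired statement.

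The principal difficulty lies in the algebraic identification in the display above, which relates a purely algebraic tensor product to a completed one. A secondary technical point is the residue-field hypothesis: in the general perfect case one handles it by enlarging $k_0$ via Cohen's structure theorem to the residue field of $B$, at the cost of replacing $k$ by a finite unramified extension.
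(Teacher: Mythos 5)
Your proposal is correct and takes essentially the same route as the paper: reduce to the case of a $k(v)$-rational point by base change and realize $X\otimes_k k(v)$ as a closed subscheme of some $\hat{\mathbf{A}}^N$ (over $\ell_0(\!(t)\!)$), so that Proposition \ref{prop implicit function hatA} applies. The paper compresses exactly the two points you elaborate --- the identification of the base-changed ring as a finite $\ell_0[\![t,y]\!]$-algebra (``after a suitable base change'') and the closed immersion into $\hat{\mathbf{A}}^N$ (via the cited structure theorem for complete local rings from Matsumura) --- so your write-up is a fleshed-out version of the same argument.
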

    \begin{proof}
       We may assume that $k(v) =k$ after a suitable base change. In order to apply Proposition \ref{prop implicit function hatA} we only need to check that $X$ admits a closed immersion to $\hat{\mathbf{A}}^n$ for certain $n$. This can be done thanks to \cite[Theorem 28.3]{Matsumura1989CommutaiveRing}.
    \end{proof}
    Lastly we would like prove the following fact.
    \begin{prop} \label{prop opens are infinite}
        Keep the notation from the previous proposition. Every non-empty open subset of $X(k)$ is infinite.
    \end{prop}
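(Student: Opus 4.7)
The plan is to reduce the statement to the fact, coming from Lemma \ref{lemma points of hatA}, that $\hat{\mathbf{A}}^1(k)$ is identified with the maximal ideal $\mathfrak{m} \subset k_0[\![x]\!]$ equipped with the $x$-adic topology, which clearly has no isolated points. The key tool to transfer this property to $X(k)$ is the implicit function theorem in the form of Proposition \ref{prop implicit function hatA}.

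First, I would verify that $X = \Spec A[x^{-1}]$ is a regular Noetherian scheme of dimension one. The ring $A$ is the integral closure of the two-dimensional complete regular local ring $k_0[\![x,y]\!]$ in the finite extension $K$, so $A$ is itself a two-dimensional normal local domain. Its localization $A[x^{-1}]$ is then a one-dimensional normal Noetherian domain, hence a Dedekind domain; every local ring of $X$ at a closed point is therefore a DVR.

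Second, given a non-empty open $U \subset X(k)$ and a point $v \in U$, I would construct a $k$-morphism $\pi : X \to \hat{\mathbf{A}}^1$ that is \'etale at $v$. Using the closed immersion $X \hookrightarrow \hat{\mathbf{A}}^n$ provided by the Cohen structure theorem (as in the proof of Proposition \ref{prop etale are local homeo non finite type}) and Lemma \ref{lemma points of hatA}, the image of $v$ in $\hat{\mathbf{A}}^n(k)$ corresponds to a tuple $(\alpha_1, \dots, \alpha_n) \in \mathfrak{m}^n$. The substitution $z_i \mapsto z_i - \alpha_i$ is a well-defined $k$-automorphism of $\hat{\mathbf{A}}^n$, since each $\alpha_i$ lies in $(x) \subset (x, \underline{z})$, so I may assume $v$ is the origin. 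The maximal ideal of the DVR $\mathcal{O}_{X, v}$ is then generated by the restrictions $z_1|_X, \dots, z_n|_X$, and since it is principal, at least one of them (say $z_1|_X$) must be a uniformizer. The projection onto the first coordinate $\pi : X \to \hat{\mathbf{A}}^1 = \Spec k_0[\![x, z_1]\!][x^{-1}]$ then induces an isomorphism on completed local rings at $v$, and is therefore \'etale at $v$.

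Finally, Proposition \ref{prop implicit function hatA} applied to $\pi$ ensures that $\pi_k : X(k) \to \hat{\mathbf{A}}^1(k)$ is a local homeomorphism at $v$. Consequently, a sufficiently small open neighborhood of $v$ contained in $U$ maps homeomorphically onto an open neighborhood of $\pi(v) = 0$ in $\hat{\mathbf{A}}^1(k) = \mathfrak{m}$. Since every non-empty open in $\mathfrak{m}$ contains a ball $x^N k_0[\![x]\!]$, which is infinite, this forces $U$ to be infinite. The subtle point of the argument is step two, where I must verify that some coordinate restriction $z_i|_X$ is a uniformizer of $\mathcal{O}_{X, v}$; this ultimately rests on the one-dimensional regularity of $X$ established in step one.
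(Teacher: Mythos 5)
Your proposal is correct and follows essentially the same route as the paper: the paper simply picks a uniformizer $\pi_v \in A$ at $v$ and defines the morphism $X \to \hat{\mathbf{A}}^1$ by $y \mapsto \pi_v$, notes it is \'etale at $v$, and then applies the implicit function theorem (Proposition \ref{prop implicit function hatA} via Proposition \ref{prop etale are local homeo non finite type}) together with the fact that non-empty opens of $\mathfrak{m} = xk_0[\![x]\!]$ are infinite. Your construction, translating $v$ to the origin inside $\hat{\mathbf{A}}^n$ and projecting onto a coordinate $z_1$ whose restriction is a uniformizer, is just this same map with a particular choice of $\pi_v$, so the two arguments coincide in substance.
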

    \begin{proof}   
        Let $U$ be a non-empty open set of $X(k)$. For a point $v$ in $U$ take $\pi_v \in A$ an uniformizer at $v$. We may define a morphism $k_0[\![x,y]\!][x^{-1}] \to A[x^{-1}]$ above $k$ by sending $y$ to $\pi_v$. The corresponding morphism of schemes $X \to \hat{\mathbf{A}}^1$ is \'etale at $v$. Then the morphism $\pi_{v*}: U \to \mathfrak{m}$ is a local homeomorphism at $v$ by Proposition \ref{prop etale are local homeo non finite type}. In particular, any non-empty open subset of $X(k)$ contains an open subset homeomorphic to an open set of $\mathfrak{m}$. We conclude by observing that every non-empty open subset of $\mathfrak{m} = xk_0 [\![x]\!]$ is infinite.
    \end{proof}
\section{Reocurrence}\label{sec reocurrence}

    In this section we prove the main theorem of the article (Theorem \ref{thm reocurrence}) and a refinement (Theorem \ref{thm finer reocurrence}) which is necessary for some applications of our result to very weak approximation (Theorem \ref{thm weak approx split extension}). \par 
    
    During this section $K$ will be a field of one of the following types:
    \begin{enumerate}
        \item[(a)] the function field of a smooth projective curve $C$ over a henselian valued field $k$.
        \item[(b)] a finite extension of $k_0(\!( x,y)\! )$ where $k_0$ is a perfect field.
    \end{enumerate}
    For type (a) we consider the set of places of $K$ coming from closed points of the curve $C$, which we denote by $C^{(1)}$. For fields of type (b) we denote by $A$ the integral closure of $k_0[\! [x,y] \!]$ in $K$ and by $C$ the scheme $\Spec A[x^{-1}]$. For type (b) we fix $k$ to be $k_0(\! ( x )\!)$. In both cases we refer to $C$ as the Dedekind scheme associated to $K$. \\

        \begin{thm} \label{thm reocurrence}
            Let $L/K$ be a finite Galois extension with group $G$ and $v_0 \in C^{(1)}$ a place that is unramified in $L/K$. Then there exists an open neighbourhood $U$ of $v_0$ in $C(k(v_0))$ such that for every $v \in U$ the decomposition groups $G_v$ and $G_{v_0}$ are conjugates in $G$.
        \end{thm}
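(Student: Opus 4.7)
The plan is to apply the implicit function theorem (Proposition~\ref{prop val henselian is top henselian} in case (a), Proposition~\ref{prop etale are local homeo non finite type} in case (b)) to the normalization $\pi\colon C' \to C$ of $C$ in $L$. This $\pi$ is a finite morphism of degree $|G|$, carries a natural $G$-action over $C$, and is \'etale along the entire fiber $\pi^{-1}(v_0)$ thanks to the unramified hypothesis. We will use this to transfer geometric information from $v_0$ to a neighborhood.

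Fix $w_0 \in \pi^{-1}(v_0)$ with $H := G_{w_0}$, and set $T := k(w_0)$. Then $T/k(v_0)$ is Galois with group $H$, the remaining closed points above $v_0$ are the $G$-translates of $w_0$ with stabilizers $\sigma H\sigma^{-1}$, and the set of $T$-rational points of $C'$ above $v_0$ has $|G|$ elements on which $G$ acts freely and transitively. These partition into $[G:H]$ blocks of size $|H|$ (one per closed point above $v_0$); via the unramified identification $H \cong \Gal(T/k(v_0))$, this partition coincides with the orbit decomposition under the subgroup $H \subset G$ acting on the $T$-preimages.

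Applying the implicit function theorem at each of the $|G|$ $T$-points yields pairwise disjoint open neighborhoods $V_1,\ldots,V_{|G|} \subset C'(T)$, each mapping homeomorphically via $\pi_*$ onto a common open neighborhood $W \subset C(T)$ of $v_0$. By continuity of the $G$-action on $C'(T)$ (property~(T1) applied to each $\sigma \in G$), a further shrinking of $W$ ensures $\sigma(V_j) = V_{\sigma\cdot j}$ for every $\sigma, j$, reproducing the $G$-action at $v_0$. Set $U := W \cap C(k(v_0))$; by property~(T3) this is an open neighborhood of $v_0$ in $C(k(v_0))$. For $v \in U$, each $V_j$ contains exactly one $T$-preimage of $v$, and these exhaust the $T$-fiber by the degree of $\pi$. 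Since the $G$-action on them (hence the $H$-action) agrees with the action at $v_0$, the $H$-orbit decomposition on $T$-preimages is preserved, and with it the partition into closed-point blocks: the setwise stabilizer of each block in $G$ remains a conjugate of $H$. This gives $G_v$ conjugate to $H = G_{v_0}$, as required.

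The main obstacle is justifying that the closed-point partition at $v$ actually agrees with the $H$-orbit partition. This identification is transparent when the image closed point $\bar v$ of $v$ satisfies $k(\bar v) = k(v_0)$, since then $\Gal(T/k(\bar v)) \cong H$ and closed-point fibers are precisely $\Gal(T/k(\bar v))$-orbits of $T$-preimages. One must therefore rule out, by a further shrinking of $U$, the possibility of nearby closed points of $C$ with strictly smaller residue field, or handle them separately; this is expected to follow from the fact that such closed points cannot accumulate at $v_0$ in the $C(k(v_0))$-topology. The remaining verifications---unramifiedness of $v$, exhaustion of the fiber, and continuity of the $G$-action---are then routine applications of the implicit function theorem and of property~(T1).
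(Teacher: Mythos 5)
The central step of your argument does not work as written. You claim that at $v_0$ the partition of the $|G|$ many $T$-points into closed-point blocks ``coincides with the orbit decomposition under the subgroup $H \subset G$'', and later that, because your bijection between the fibers over $v_0$ and over $v$ is $G$-equivariant, it preserves the block partition. Both fibers are $G$-torsors (you prove this yourself: $|G|$ points, free transitive action), and any two $G$-torsors admit a $G$-equivariant bijection; so $G$-equivariance alone can carry no information whatsoever about decomposition groups. The information sits in the \emph{commuting} residual Galois action: the blocks over $\bar v$ are the orbits of $\Gal(T/k(\bar v))$ acting by post-composition on $T$-points, not the orbits of $H$ acting through the $G$-action. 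Concretely, if you identify the fiber over $v_0$ with $G$ via a base point at $w_0$, one of these two partitions is the partition into left cosets of $H$ and the other into right cosets of $H$; they coincide only when $H$ is normal in $G$ (only the block through the base point is an $H$-orbit in general). Moreover, even if the two partitions agreed at $v_0$, deducing the blocks at $v$ from the $H$-orbits at $v$ would require the same identification at $v$ --- which is essentially the statement to be proved, so the step is circular. Hence the sentence ``the $H$-orbit decomposition on $T$-preimages is preserved, and with it the partition into closed-point blocks'' is a genuine gap, not a routine verification.

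The strategy can be repaired, but the repair is exactly the extra input your sketch omits: the group $\Gal(T/k(v_0))$ acts continuously on $C'(T)$ by post-composition (automorphisms of $T$ over $k(v_0)$ are continuous since the valuation extends uniquely), it fixes $v_0$ and every $v \in C(k(v_0))$, and it permutes the $T$-points over $v_0$; shrinking so that $\tau(V_j)=V_{\tau\cdot j}$ for all $\tau$ makes your bijection equivariant for this action as well, and then the block partitions do correspond --- \emph{provided} $k(\bar v)=k(v_0)$. This is where your second, self-flagged issue becomes essential rather than cosmetic: points of $U$ supported at closed points with strictly smaller residue field must be removed, which can indeed be done since they lie in $\bigcup_{k \subseteq m \subsetneq k(v_0)} C(m)$, a finite union of closed subsets (property (T3)) none of which contains $v_0$. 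Note that this combination --- controlling the residue field and the residual Galois data along the deformation --- is precisely what Proposition \ref{prop degree of place is open} achieves via the Weil-restriction construction, and the paper then avoids handling the full $G$-torsor by passing to the fixed field of $G_{v_0}$ and using only the two open conditions of Lemmas \ref{lemma reocurrence degree one places} and \ref{lemma reocurrence inert}. As it stands, your proposal reproduces the easy half of that argument (the implicit function theorem and unramifiedness of nearby points) but replaces the hard half by an equivariance statement that is too weak to detect decomposition groups.
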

    
    The only justification to invert $x$ in type (b) is for $C$ to be naturally a $k$-scheme.
    \begin{rmk}
        Theorem \ref{thm reocurrence intro} can be deduced from Theorem \ref{thm reocurrence}. Indeed, if $v_0$ is a place above $x$, it would be enough to exchange the roles of $x$ and $y$ in the construction of the topology. Moreover, the open set in the statement of Theorem \ref{thm reocurrence} is necessarily uncountable because for type (a) it contains an open subset of $k$ and for type (b) it contains an open subset of $xk_0[\![x]\!]$. Furthermore, if $K$ is of type (b), the cardinality of the open subset is strictly bigger than the cardinality of $k_0$.
    \end{rmk}
        Before proving Theorem \ref{thm reocurrence} we will prove the following key proposition.
       
    \begin{prop}\label{prop degree of place is open}
        Let $L/K$ be a finite extension and $v \in C^{(1)}$ a place that is unramified in $L/K$. Denote by $D$ the Dedekind scheme associated to $L$. Let $w \in D^{(1)}$ be a place of $L$ above $v$. Then there exists a locally closed subset $W$ of $D(k(w))$ containing $w$ and satisfying the following properties
        \begin{enumerate}
            \item Its image $V$ in $C(k(w))$ lies in $C(k(v))$.
            \item $V$ is open in $C(k(v))$.
            \item For every $w' \in W$ the degree of $w'$ in $L/K$ is the same as the degree of $w$.
        \end{enumerate}
    \end{prop}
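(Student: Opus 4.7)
The plan is to combine the implicit function theorem with a tower‑of‑fields argument inside a local algebraic model. Since $v$ is unramified in $L/K$, the induced morphism $f\colon D\to C$ is \'etale at $w$. Applying Proposition~\ref{prop val henselian is top henselian} (in case~(a)) or Proposition~\ref{prop etale are local homeo non finite type} (in case~(b)), the map $f_w\colon D(k(w))\to C(k(w))$ is a local homeomorphism at $w$, so I can choose an open neighborhood $W^0\subseteq D(k(w))$ of $w$ on which $f_w$ restricts to a homeomorphism onto an open neighborhood $V^0\subseteq C(k(w))$ of $v$. By property~(T3), $C(k(v))$ is closed in $C(k(w))$; thus $V^0\cap C(k(v))$ is open in $C(k(v))$ and its preimage $f_w^{-1}(V^0\cap C(k(v)))\cap W^0$ is locally closed in $D(k(w))$, and these contain $v$ and $w$ respectively.

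To control residue degrees, I pass to a local algebraic model. Since $\mathcal{O}_{C,v}\to\mathcal{O}_{D,w}$ is an \'etale extension of DVRs with residue field extension of degree $e:=[k(w):k(v)]$, the target is monogenic of the form $\mathcal{O}_{C,v}[T]/(P)$ with $P$ monic of degree $e$ (obtained by lifting a primitive element of $k(w)/k(v)$ and its minimal polynomial). Spreading this out and invoking \cite[\S2.3, Prop.~3]{NeronModels}, after shrinking I obtain Zariski neighborhoods $U_0=\Spec A[T]/(P)\subseteq D$ of $w$ and $V_2=\Spec A\subseteq C$ of $v$ with $\deg P=e$ and $P'$ invertible on $U_0$. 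A $k(w)$-point of $U_0$ is then a pair $(\phi\colon A\to k(w),\,\beta\in k(w))$ with $P_\phi(\beta)=0$; the point $w$ itself corresponds to $(\phi_0,\beta_0)$ with $\phi_0$ factoring as $A\twoheadrightarrow k(v)\hookrightarrow k(w)$ and $\beta_0$ a primitive element of $k(w)/k(v)$.

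I then shrink further using the primitive element theorem. The finite separable extension $k(w)/k(v)$ has only finitely many intermediate fields, so the union $S\subsetneq k(w)$ of its proper intermediate subfields is a finite union of proper closed $k(v)$-linear subspaces, hence a closed, nowhere-dense subset of $k(w)$. Since $\beta_0\notin S$, the complement $k(w)\setminus S$ is an open neighborhood of $\beta_0$. I define $W\subseteq W^0\cap U_0(k(w))$ as the subset cut out by requiring $\beta\notin S$ (an open condition, via the continuous evaluation map $(\phi,\beta)\mapsto\beta$) together with $\phi$ factoring through $k(v)$ (a closed condition, coming from $f_w^{-1}(C(k(v)))$). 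Then $W$ is locally closed in $D(k(w))$, contains $w$, and its image $V:=f_w(W)$ is open in $C(k(v))$.

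Finally I verify degree preservation. For $w'=(\phi,\beta)\in W$ with image $v'\in V$, let $v'_0\in C$ and $w'_0\in D$ be the underlying closed points: then $k(v'_0)=\phi(A)\subseteq k(v)$ and $k(w'_0)=k(v'_0)(\beta)$. Because $\beta\notin S$, one has $k(v)(\beta)=k(w)$, so $[k(v)(\beta):k(v)]=e$. Combining the standard compositum bound $[k(v)\cdot k(v'_0)(\beta):k(v'_0)(\beta)]\leq [k(v):k(v'_0)]$ with the equality $k(v)\cdot k(v'_0)(\beta)=k(w)$ yields $[k(v'_0)(\beta):k(v'_0)]\geq e$, while $\beta$ being a root of $P_\phi\in k(v'_0)[T]$ of degree $e$ provides the reverse inequality. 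Hence $[k(w'_0):k(v'_0)]=e=[k(w):k(v)]$. The main delicate point of the argument is securing the local polynomial model with $\deg P$ equal exactly to the residue degree $e$; the rest is a topological shrinking combined with an elementary tower-of-fields computation.
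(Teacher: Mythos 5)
Your strategy runs parallel to the paper's in its two key ideas (shrink using the implicit function theorem; force the image to be a $k(v)$-point and force $\beta$ to avoid the proper intermediate fields of $k(w)/k(v)$, which is exactly what the paper does by removing $\bigcup_{k(v)\subseteq m\subsetneq k(w)}\mathrm{R}_{m/k(v)}D_m$ inside a Weil restriction). But there is a genuine gap, and it sits precisely at the point you flag as delicate: the Zariski-local model $U_0=\Spec A[T]/(P)\subseteq D$ with $P$ monic of degree $e=[k(w):k(v)]$ does not exist unless $v$ is inert in $L/K$. Indeed, an affine open $U_0\subseteq D$ is integral with function field $L$, so a presentation $U_0=\Spec A[T]/(P)$ with $\Spec A\subseteq C$ open and $P$ monic of degree $e$ (even after a further localization making $P'$ invertible) forces $[L:K]\le e$; since always $e=[k(w):k(v)]\le[L:K]$, this means $e=[L:K]$. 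The same objection already kills the local-ring claim: $\mathcal{O}_{D,w}$ has fraction field $L$, hence cannot be of the form $\mathcal{O}_{C,v}[T]/(P)$ with $\deg P=e$ when $e<[L:K]$ (for instance, if $v$ splits completely then $e=1$ but $\mathcal{O}_{D,w}\neq\mathcal{O}_{C,v}$). The monogenic presentation of degree exactly $e$ is a statement about the completion or henselization of $\mathcal{O}_{C,v}$, and it does not spread out to a Zariski neighbourhood; what \cite[\S 2.3, Proposition 3]{NeronModels} actually provides is a standard \'etale model whose polynomial has, in general, degree strictly larger than $e$ (its reduction at $v$ contains the minimal polynomial of $\bar\beta$ only as one factor among others). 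With $\deg P>e$ your reverse inequality $[k(v'_0)(\beta):k(v'_0)]\le e$ in the last paragraph collapses, and only the lower bound $\ge e$ survives.

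The remaining ingredients of your argument (the topological shrinking via Propositions \ref{prop val henselian is top henselian} and \ref{prop etale are local homeo non finite type}, property (T3), the openness of the condition $\beta\notin S$, and the compositum computation giving the lower bound) are sound, and the condition $\beta\notin S$ is the same device as the paper's removal of the restrictions of scalars of the intermediate fields. To close the gap along your lines you would need a Hensel/Krasner-type continuity argument identifying, for $(\phi,\beta)$ close to $(\phi_0,\beta_0)$, which degree-$e$ factor of $P_\phi$ the root $\beta$ belongs to — which essentially amounts to redoing the implicit function theorem in the henselized model. The paper avoids this by imposing your two conditions at the level of schemes, via the fibre product of $C_{k(v)}$ with $\mathrm{R}_{k(w)/k(v)}D_{k(w)}\setminus\bigcup_m\mathrm{R}_{m/k(v)}D_m$ over $\mathrm{R}_{k(w)/k(v)}C_{k(w)}$, and then applying the implicit function theorem once to the map $\psi$ of that diagram, which yields the open image $V$ and the control of residue fields simultaneously.
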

    \begin{proof}
        Consider the following cartesian diagram
        \begin{equation*}
            \begin{tikzcd}
                \phi^{-1}(C_{k(v)}) \ar[r,"\psi"] \ar[d] & C_{k(v)} \ar[d] \\
                 \mathrm{R}_{k(w)/k(v)} D_{k(w)} \setminus \; \bigcup_{k(v)\subseteq m \subsetneq k(w) } \mathrm{R}_{m/k(v)}D_m \ar[r,"\phi"] & \mathrm{R}_{k(w) / k(v)} C_{k(w)}.
            \end{tikzcd}
        \end{equation*}
        We may apply Proposition \ref{prop val henselian is top henselian} for type (a) or Proposition \ref{prop etale are local homeo non finite type} for type (b) to find open neighbourhoods $V$ of $v$ in $C(k(v))$ and $W$ of $w$ in $\phi^{-1}(C_{k(v)})(k(v))$ such that $\psi_{k(v)}$ induces a homeomorphism between $W$ and $U$. Clearly $W$ and $V$ satisfy conditions 1 and 2 because $\phi^{-1}(C_{k(v)})(k(v))$ is a closed subset of $D(k(w))$. Finally, after removing the elements of $W$ that ramifies in $L/K$, we see that $W$ satisfies condition 3 because it is a closed subset of $\bigcup_{k(v) \subseteq m \subsetneq k(w)}D(m)$ and therefore the residue field of every $w' \in W$ is equal to $k(w)$.
    \end{proof}
    Proposition \ref{prop degree of place is open} has as corollaries the following lemmas.
    \begin{lemma} \label{lemma reocurrence degree one places}
            Let $L/K$ be a finite extension. Let $\ell/k$ be a finite field extension. Then the set of places $v \in C(\ell)$ such that $L$ has a place of degree one over $v$ and $v$ is unramified in $L/K$ is open.
    \end{lemma}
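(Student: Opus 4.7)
The plan is to show $S = \{v \in C(\ell)\,:\,v \text{ unramified in } L/K,\, L \text{ has a degree-one place above } v\}$ is open by mimicking the strategy of Proposition~\ref{prop degree of place is open}, but taking openness in $C(\ell)$ rather than in $C(k(v))$. First I would dispose of the unramified condition: the ramification locus of $L/K$ is a finite (hence Zariski-closed) subset of $C$, and its preimage in $C(\ell)$ under the natural continuous map is closed by property (T1). Thus I may fix a $v_0 \in S$ and focus on producing an open neighborhood of $v_0$ in $C(\ell)$ consisting of places admitting a degree-one place above.

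Let $\bar{v}_0 \in C$ denote the closed-point image of $v_0$ and let $w_0$ be a degree-one place of $L$ above $\bar{v}_0$, so $k(\bar{w}_0) = k(\bar{v}_0) \hookrightarrow \ell$; via this embedding, $w_0$ also defines an $\ell$-point of $D$ lifting $v_0$, and the finite morphism $f\colon D \to C$ is \'etale at $\bar{w}_0$ by the unramified hypothesis. The plan is to apply the implicit function theorem (Proposition~\ref{prop val henselian is top henselian} in case (a); a base-changed version of Proposition~\ref{prop etale are local homeo non finite type} in case (b)) to $f_\ell\colon D_\ell \to C_\ell$ at the $\ell$-point $w_0$, producing open neighborhoods $W \subseteq D(\ell)$ of $w_0$ and $V \subseteq C(\ell)$ of $v_0$ with $f_\ell\colon W \xrightarrow{\sim} V$ a homeomorphism, so that every $v \in V$ admits a unique lift $w \in W \subseteq D(\ell)$.

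The main obstacle, and the delicate point, is to guarantee that each such lift $w$ actually represents a \emph{degree-one} place above $v$, i.e.\ that the closed-point residue field $k(\bar{w})$ equals $k(\bar{v})$ rather than being a proper extension. I would handle this by importing the Weil-restriction stratification from the proof of Proposition~\ref{prop degree of place is open}: shrink $W$ further by removing the closed subsets of $D(\ell)$ corresponding to lifts with ``oversize'' residue field, i.e.\ those lying in the image of $R_{m/k(\bar{v}_0)}D_m(k(\bar{v}_0))$ for intermediate fields $k(\bar{v}_0) \subseteq m \subsetneq \ell$. Combined with Hensel's lemma applied inside the finite henselian extension $k(\bar{v})/k$---which locates the root of the local \'etale defining polynomial inside $k(\bar{v})$ itself, provided the approximating root can be arranged to lie in a subfield contained in every $k(\bar{v})$ for $v$ close to $v_0$---this forces $k(\bar{w}) = k(\bar{v})$, so $w$ witnesses a degree-one place above $v$. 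Then $f_\ell(W)$ is the desired open neighborhood of $v_0$ in $S$.
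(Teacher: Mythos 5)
Your overall strategy (discard the ramified locus, then apply the implicit function theorem to $f\colon D\to C$ at the degree-one place $w_0$) is in spirit the paper's: the paper proves this lemma in one line by applying Proposition \ref{prop degree of place is open} to a place $w$ of degree one, so that $k(w)=k(v)$, the Weil restriction collapses, and the residue-field bookkeeping is already built into that proposition because everything is done over $k(v)$ (the fibre product with $C_{k(v)}$, points taken with values in $k(v)$). By choosing instead to run the argument over $\ell$, you create the residue-field matching problem yourself, and the step you rightly single out as delicate is exactly where your argument has a genuine gap.

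Concretely, your proposed fix does not work. The image of $\mathrm{R}_{m/k(\bar{v}_0)}D_m(k(\bar{v}_0))$ in $D(\ell)$ is just $D(m)$, i.e.\ the locus of lifts whose residue field is \emph{contained} in $m$: these are the undersized residue fields, not the ``oversize'' ones you want to discard. Removing them is counterproductive: if $k(\bar{v}_0)\subsetneq \ell$, then $w_0$ itself has residue field $k(\bar{v}_0)\subseteq m$ and is removed, so you no longer have a neighbourhood of $v_0$; and for nearby $v$ with $k(\bar{v})\subsetneq\ell$ the degree-one witnesses are precisely the lifts you delete, while the surviving lifts are the oversized ones. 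If instead $k(\bar{v}_0)=\ell$ the removal is vacuous. In any case, deleting ``bad'' lifts from $W$ cannot produce good lifts for the corresponding $v$; it only deletes those $v$ from the image, so openness of the set in question is not established. What is actually needed — and what your final sentence leaves as an unproved proviso — is that for $v$ close to $v_0$ the unique nearby lift is rational over $k(\bar{v})$ itself; this requires, e.g., a Krasner-type argument that $k(\bar{v})\supseteq k(\bar{v}_0)$ for $v$ near $v_0$ together with a Hensel or Galois-descent argument over $k(\bar{v})$ (or simply running the construction over $k(v)$ as in Proposition \ref{prop degree of place is open}). Two smaller points: closedness of the ramified locus in $C(\ell)$ follows from (T2), not (T1); and for type (b) the ``base-changed version'' of Proposition \ref{prop etale are local homeo non finite type} over $\ell$ that you invoke is not available as stated and would itself need justification, which the paper avoids by staying over $k(v)$.
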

     \begin{lemma} \label{lemma reocurrence inert}
     Let $L/K$ be a finite Galois extension with group $G$ and $v_0$ a place of $K$ inert in $L/K$. Then, there exists an open neighbourhood $U \subseteq C(k(v_0))$ of $v_0$ such that every $v \in U$ is inert.
    \end{lemma}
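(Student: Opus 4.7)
The plan is to deduce the lemma as a direct application of Proposition \ref{prop degree of place is open}, exploiting the fact that in a Galois extension, being inert is equivalent to the existence of a single place above with full residue degree. Concretely, since $v_0$ is inert in $L/K$, it is unramified and there is a unique place $w_0 \in D^{(1)}$ above $v_0$, with $[k(w_0) : k(v_0)] = |G|$. I would apply Proposition \ref{prop degree of place is open} to the pair $(v_0, w_0)$ to obtain a locally closed subset $W \subseteq D(k(w_0))$ containing $w_0$, whose image $V \subseteq C(k(w_0))$ actually lies in $C(k(v_0))$ and is open there, and such that every $w' \in W$ has residue degree $|G|$ over its image in $V$.

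I would then set $U := V$ and verify that each $v \in U$ is inert in $L/K$. The construction in the proof of Proposition \ref{prop degree of place is open} explicitly removes from $W$ any point that ramifies in $L/K$; hence the preimage $w' \in W$ of $v$ is unramified over $v$, and so $v$ itself is unramified in $L/K$. In the Galois extension $L/K$, the invariants $e$, $f$, $g$ attached to $v$ satisfy $efg = |G|$, with $e$ and $f$ independent of the choice of place above $v$. Since $w'$ contributes $e = 1$ and $f = [k(w') : k(v)] = |G|$, we are forced to have $g = 1$, so $w'$ is the only place of $L$ above $v$ and the decomposition group $G_v$ coincides with $G$; that is, $v$ is inert.

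There is essentially no technical obstacle here: once Proposition \ref{prop degree of place is open} is in hand, the only thing to check is that every $v \in V$ genuinely comes from a point of $W$, which is ensured by the homeomorphism $W \cong V$ produced during the proof of that proposition. In other words, this lemma is the specialization of Lemma \ref{lemma reocurrence degree one places} to the opposite extreme case, where the single place above $v_0$ has maximal degree rather than degree one.
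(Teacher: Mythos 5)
Your proposal is correct and matches the paper's argument: the paper also deduces this lemma by applying Proposition \ref{prop degree of place is open} to the (unique) place $w_0$ above $v_0$ of degree $[L:K]$, so that every $v$ in the resulting open set has a place above it of full residue degree, which by $efg=[L:K]$ in the Galois extension forces $v$ to be inert. Your extra remark about unramifiedness is harmless but not needed, since $f=[L:K]$ already forces $e=g=1$.
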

    Indeed, to prove Lemma \ref{lemma reocurrence degree one places} we only need to apply Proposition \ref{prop degree of place is open} to a place of degree one and for Lemma \ref{lemma reocurrence inert} to a place of degree $[L:K]$.
    \begin{proof}[Proof of Theorem \ref{thm reocurrence}]
        Denote by $D$ the Dedekind scheme corresponding to $L$. Fix $w_0$ a place of $L$ in $D^{(1)}$ above $v_0$ to define the decomposition group $G_{v_0}$. Let $L'$ be the fixed field of the decomposition group $G_{v_0}$ and $D'$ the Dedekind scheme corresponding to $L'$. Denote by $v_0'$ the restriction of $w_0$ to $L'$. By definition of $L'$, the extension $L/L'$ is inert at $v_0'$ and $v_0'$ has degree one above $v_0$, in particular, $k(v_0') = k(v_0)$. Then we can apply Lemma \ref{lemma reocurrence inert} to get an open neighbourhood $U'$ of $v_0'$ in $D'(k(v_0))$ such that every element of $U'$ is inert in $L/L'$. On the other hand, thanks to Lemma \ref{lemma reocurrence degree one places}, there is an open neighbourhood $V$ of $v_0$ in $C(k(v_0))$ such that every element of $V$ has a place of $L'$ of degree one above . \par 
        
        The image of $U'$ in $C(k(v_0))$ is an open set because every element of $U'$ is unramified and we can apply Proposition \ref{prop val henselian is top henselian} or Proposition \ref{prop etale are local homeo non finite type}. Let us denote by $U$ the intersection of this image with $V$ in $C(k(v_0))$. It is easy to see that the decomposition group of every element of $U$ is conjugate to $G_{v_0}$.
    \end{proof}
    As mentioned before, this result can be understood as a ``reocurrence'' phenomenon. In general, we cannot prove existence of places with a given decomposition group, but for totally split places we can.
    \begin{prop}\label{prop existence totally split}
        Let $L/K$ be a finite Galois extension. Then there exists a place $v \in C^{(1)}$ of $K$ that is totally split in $L/K$. 
    \end{prop}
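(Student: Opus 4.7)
The plan is to exhibit a totally split place by combining the implicit function theorem with a density argument for primitive elements. First, I would pick any closed point $w \in D^{(1)}$ in the étale locus of $\pi : D \to C$; such $w$ exist in abundance since $\pi$ is generically étale and its non-étale (ramified) locus is a finite closed subset of the one-dimensional scheme $D$. Setting $\ell := k(w)$, a finite extension of $k$, I view $w$ as an $\ell$-point of $D$ via the identity $k(w) = \ell$, and denote by $v := \pi(w) \in C^{(1)}$ its image, so $k(v) \subseteq \ell$.

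By Proposition \ref{prop val henselian is top henselian} in type (a), respectively Proposition \ref{prop etale are local homeo non finite type} in type (b), the map $\pi_\ell : D(\ell) \to C(\ell)$ is a local homeomorphism at $w$. This yields a neighbourhood $N$ of $w$ in $D(\ell)$ and a neighbourhood $U$ of $v$ in $C(\ell)$ such that $\pi_\ell$ restricts to a homeomorphism $N \xrightarrow{\sim} U$. The key step is to locate inside $U$ an $\ell$-point whose underlying closed point $v' \in C^{(1)}$ satisfies $k(v') = \ell$: if so, its lift $\tilde w' \in N$ corresponds to a closed point $w' \in D^{(1)}$ with $k(w') \hookrightarrow \ell$, and the chain of inclusions $\ell = k(v') \subseteq k(w') \subseteq \ell$ forces $k(w') = k(v') = \ell$, so that $w'$ has residue degree one over $v'$. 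Since $L/K$ is Galois, all places of $L$ above $v'$ have the same residue degree over $v'$; combined with unramifiedness (preserved by shrinking $N$ so that it lies in the étale locus), this gives that $v'$ is totally split in $L/K$.

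To produce the required $\ell$-point inside $U$, I would fix a uniformizer $\pi_v$ at $v$ and use it to build an étale morphism $C \to \mathbb{A}^1_k$ in type (a), or $C \to \hat{\mathbf{A}}^1$ in type (b) (as in the proof of Proposition \ref{prop opens are infinite}, sending the coordinate $y$ to $\pi_v$). A further application of the implicit function theorem identifies $U$, after shrinking, with an open neighbourhood of $0$ in $\ell$ (respectively in $\mathfrak{m}_\ell$). Under this identification, each parameter $a$ corresponds to an $\ell$-point whose underlying closed point $\tilde v'_a$ satisfies $k(a) \subseteq k(\tilde v'_a) \subseteq \ell$, where $k(a)$ denotes the subfield of $\ell$ generated by $a$ over $k$; hence any $a$ with $k(a) = \ell$ yields a suitable $v' = \tilde v'_a$.

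The main obstacle is thus the density of such parameters. The complement is the finite union of the proper intermediate fields $k \subseteq \ell' \subsetneq \ell$, respectively of their intersections $\mathfrak{m}_{\ell'} = \ell' \cap \mathfrak{m}_\ell$, and I need each such subset to be nowhere dense in $\ell$ (respectively $\mathfrak{m}_\ell$). In type (a), $\ell'$ is a proper $k$-subspace of the finite-dimensional topological $k$-vector space $\ell$, hence closed with empty interior. In type (b), a non-empty interior point of $\mathfrak{m}_{\ell'}$ inside $\mathfrak{m}_\ell$ would force $\mathfrak{m}_\ell^{\,n} \subseteq \mathfrak{m}_{\ell'}$ for some $n \geq 1$; taking $k$-spans then gives $\ell = k \cdot \mathfrak{m}_\ell^{\,n} \subseteq k \cdot \mathfrak{m}_{\ell'} \subseteq \ell'$, contradicting $\ell' \subsetneq \ell$. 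This density conclusion then delivers the desired totally split place $v'$.
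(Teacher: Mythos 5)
Your proof is correct, and at its core it runs on the same engine as the paper's: use the implicit function theorem (Proposition \ref{prop val henselian is top henselian}, resp.\ Proposition \ref{prop etale are local homeo non finite type}) to get a local homeomorphism between $D(\ell)$ and $C(\ell)$ at an unramified point, then produce a nearby place whose residue field upstairs and downstairs is all of $\ell$, so that $e=f=1$ and Galoisness gives total splitting. Where the two arguments genuinely differ is in how that last point is secured. The paper takes $\ell$ \emph{minimal} with $D(\ell)\neq\emptyset$, so every point of $D(\ell)$ automatically has residue field $\ell$, and then asserts (with no detail) that the open image $\varphi_\ell(U)$ cannot lie in $\bigcup_{k\subseteq m\subsetneq\ell}C(m)$. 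You instead take $\ell=k(w)$ for an arbitrary $w$ in the \'etale locus, transport the problem into $\ell$ (resp.\ $\mathfrak{m}_\ell$) via a uniformizer exactly as in the proof of Proposition \ref{prop opens are infinite}, and replace minimality by the sandwich $\ell=k(v')\subseteq k(w')\subseteq\ell$; the existence of a good parameter then reduces to the union of the proper intermediate fields being negligible. This makes your proof more explicit precisely where the paper is terse, at the cost of a primitive-element-type hypothesis: you need $\ell/k$ to admit a generator (equivalently, finitely many intermediate fields), which is automatic in characteristic zero (the setting of all applications in the article) and whenever $\ell/k$ is separable, but is not literally free for type (a) over an imperfect henselian base --- though the paper's own unproved covering claim is just as delicate there, so this is a shared blind spot rather than a defect specific to your argument. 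One small polish for type (a): instead of arguing that each proper intermediate field $\ell'$ is \emph{closed} in $\ell$ (which needs $k$ complete, or the closedness built into the paper's notion of finite extension of topological fields), you can avoid closedness entirely: if a ball around $0$ were contained in $\bigcup_i \ell'_i$, then scaling any $y\in\ell$ by a small element of $k^{\times}$ shows $\ell=\bigcup_i \ell'_i$, which is impossible since a vector space over an infinite field is not a finite union of proper subspaces.
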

    \begin{proof}
        Denote by $D$ the Dedekind scheme corresponding to $L$ and by $\varphi: D \to C$ the morphism corresponding to the extension $L/K$. Let $\ell/k$ be the smallest extension such that $D(\ell)\neq \emptyset$. A place $v \in C(\ell)$ is totally split when it is unramified, there is a place $w \in D(\ell)$ above $v$, and $ v \not \in \bigcup_{k \subseteq m \subsetneq \ell} C(m)$. \par 
        We can apply Proposition \ref{prop val henselian is top henselian} or Proposition \ref{prop etale are local homeo non finite type} to deduce that $\varphi: D \to C$ induces a local homeomorphism $\varphi_{\ell}: D(\ell) \to C(\ell)$ at the unramified points. In particular, for every non-empty open set $U$ of $D(\ell)$ we have $\varphi_{\ell}(U) \not \subseteq \bigcup_{k \subseteq m \subsetneq \ell} C(m)$. Therefore, there is a place $w \in U$ such that $v:=\varphi(w)$ is totally split in $L/K$. \par 
    \end{proof}
    In fact, the previous argument proves that the set of places $w \in D(\ell)$ above totally split places is dense in $D(\ell)$. \\ 
    
    One may think that Theorem \ref{thm reocurrence} has as a direct corollary that a finite Galois extension $L/K$ is either trivial at every place in $C^{(1)}$ or it has infinitely many places where it is non trivial. This is not the case because an extension might be non-trivial only at the ramified places. We can restate this in terms of the Tate-Shafarevich groups: 
    
    \begin{defi}\label{def Tate-Shafarevich groups}
        Let $G$ be an abelian algebraic $K$-group, $S$ a subset of $C^{(1)}$ and $n \in \N$. We define the $n$-th Tate-Shafarevich of $G$ as
         \[
        \Sha^i_{S}(K,G):= \ker \left( \H^i(K,G) \to \prod_{v \in C^{(1)} \setminus S} \H^i(K_v,G)\right).
        \]
        Also fix $\Sha^i(K,G):= \Sha^i_{\emptyset}(K,G)$ and 
        \[
        \Sha^i_{\omega}(K,G) = \bigcup_{\substack{S \subseteq C^{(1)}\\ \text{finite}}} \Sha_S^i(K,G).
        \]
    \end{defi}
    
    Then Theorem \ref{thm reocurrence} does not imply directly that $\Sha^1(K,\Z/n \Z) = \Sha_{\omega}^1(K,\Z/n \Z)$. When $K$ is a function field of a curve over $\C(\!(t)\!)$, this result can be found in \cite[Proposition 2.6 (ii)]{CTH2015DualCt}. We claim that the same proof works if $K$ is a function field of a curve over a strictly henselian or a finite extension of $\C(\!(x,y)\!)$. We replicate the argument for convenience of the reader.
    \begin{prop} \label{prop Shaomega equal Sha}
       Assume that $K$ is a function field of a curve defined over a strictly henselian $k$ or a finite extension of $\C(\!(x,y)\!)$. Let $n$ be an integer prime to the characteristic of $k$. Then 
        \[
        \Sha_{\omega}^1(K,\Z/n\Z) = \Sha^1(K,\Z/n\Z).
        \]
    \end{prop}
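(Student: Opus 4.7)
The plan is to mimic the proof of \cite[Proposition 2.6 (ii)]{CTH2015DualCt}, using Theorem \ref{thm reocurrence} in place of the Chebotarev density input. Let $\alpha \in \Sha^1_\omega(K, \Z/n\Z)$, cutting out a finite cyclic Galois extension $L/K$ with Galois group $G$, and let $S := \{v \in C^{(1)} : \alpha_v \neq 0\}$ be its (by hypothesis finite) bad set. Since $\alpha_v = 0$ is equivalent to $v$ splitting completely in $L/K$ (and hence, in particular, to $v$ being unramified in $L/K$), the extension $L/K$ is unramified outside $S$. The goal is to show $S = \emptyset$.

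The essential use of reocurrence is the following. Suppose some $v_0 \in S$ is unramified in $L/K$. Then $G_{v_0} \neq 1$, and Theorem \ref{thm reocurrence} produces an open neighbourhood $U \subseteq C(k(v_0))$ of $v_0$ every element of which has decomposition group conjugate to $G_{v_0}$ and therefore satisfies $\alpha_v \neq 0$. By Proposition \ref{prop opens are infinite} in type (b), and by Proposition \ref{prop val henselian is top henselian} applied to the infinite henselian valued field $k(v_0)$ in type (a), the set $U$ is infinite, contradicting the finiteness of $S$. Hence every $v \in S$ must be \textit{ramified} in $L/K$.

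To handle this remaining case I would induct on $d := [L:K]$. Reducing via the primary decomposition of $G$ to the case $G = \Z/p^k\Z$ (so that subgroups are totally ordered) and setting $H := \langle I_v : v \in S\rangle \subseteq G$ and $M := L^H$, the subextension $M/K$ is cyclic, unramified at every place of $K$, and still split at every $v \notin S$. When $H \neq G$ one has $[M:K] < d$, so the induction hypothesis applied to $M/K$ shows that $M/K$ is in fact split at every $v \in S$ as well, which forces $G_v(L/K) \subseteq H$; a second application of the induction hypothesis---now to the cyclic extension $L/M$ of degree $|H| < d$ over the new base $M$, which is itself again a field of type (a) or (b) with its own Dedekind scheme---forces $\alpha_v = 0$ on $S$, the desired contradiction.

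The main obstacle is the borderline case $H = G$: in the prime-power setup this forces some $I_{v^*}$ to equal $G$, so that $L/K$ is totally ramified at a single place $v^* \in S$ and completely split at every other place. Here the induction does not strictly reduce the degree, and one needs a genuinely different argument. The plan is to ruling this out by a Kummer-theoretic analysis: writing $L = K(\sqrt[p^k]{a})$ (after adjoining $\mu_{p^k}$ and descending if needed), the splitting condition outside $\{v^*\}$ forces the class of $v^*$ to be $p^k$-divisible in $\Pic(C)$, and moreover forces the residues of $a$ to be $p^k$-th powers at every $v \neq v^*$; the conjunction of these constraints is precluded by the specific geometric hypotheses of the proposition (strict henselianity of $k$ and the ensuing divisibility of $J(C)(k)$ in type (a), and the structure of the integral closure of $\C[\![x,y]\!]$ in $K$ in type (b)).
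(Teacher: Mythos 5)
Your first step (the case of a bad place that is unramified in $L/K$) is exactly the paper's first case and is fine: reocurrence plus the infinitude of non-empty opens (Proposition \ref{prop opens are infinite} in type (b), the local homeomorphism onto an open of the infinite field $k(v_0)$ in type (a)) yields infinitely many places with conjugate non-trivial decomposition group, contradicting finiteness of $S$. The gap is in the all-ramified case, which is precisely the heart of the proposition. Your induction funnels everything into the borderline case $H=G$, and there you do not give a proof: the assertion that the resulting configuration is ``precluded by the specific geometric hypotheses'' is exactly the statement that needs an argument, and none is supplied (indeed, locally trivial but globally non-trivial cyclic extensions do exist over these fields, so one cannot wave at general principles -- the obstruction has to be located at the ramified place itself). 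Worse, the setup of that borderline case is incorrect: from $I_{v^*}=G$ you conclude that $L/K$ is ``completely split at every other place'', but when $|S|>1$ the other places of $S$ are ramified (hence certainly not split), so the constraints you want to extract (residues of $a$ being $p^k$-th powers at every $v\neq v^*$, divisibility of the class of $v^*$ in $\Pic(C)$) do not follow from your hypotheses, and in type (b) the relevant Picard-type input is not even formulated.

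For comparison, the paper does not induct at all and treats a ramified bad place $v$ directly by Kummer theory over $K_v$: since $\mu_n\subset K$, write $\alpha=[f]$ with $f=u\pi^r$, where $\pi$ is a uniformizer at $v$, $u$ a unit at $v$ and $n\nmid r$. The function induced by $\pi$ is a local homeomorphism from a neighbourhood $U$ of $v$ in $C(k(v))$ onto a neighbourhood of $0$ in $k(v)$ (Proposition \ref{prop val henselian is top henselian}, resp.\ Proposition \ref{prop etale are local homeo non finite type}); shrinking $U$, the class of $u(q)$ in $k(v)^*/k(v)^{*n}$ is constant, while $\pi(q)$ runs through all classes, so $f(q)=u(q)\pi(q)^r$ fails to be an $n$-th power for infinitely many $q\in U$. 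Each such $q$ is a place where $f$ is a unit with non-$n$-th-power value, hence where $\alpha$ is non-trivial, contradicting $\alpha\in\Sha^1_{\omega}(K,\Z/n\Z)$. If you want to salvage your plan, the missing content of your ``borderline case'' is essentially this local computation at the totally ramified place; without it (or an equivalent substitute), the proposal does not prove the proposition.
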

    \begin{proof}
        Since $K$ contains the $n$-th roots of unity, we have $\H^1(K,\Z/n\Z) = K^{*}/K^{*n}$ and $\H^1(K_v,\Z/n\Z) = K_v^* / K_v^{*n}$ for every $v \in C^{(1)}$. Let $f \in K^*$ be such that $[f]\in \Sha_{\omega}^1(K,\Z/n\Z)$. Let $v\in C^{(1)}$ be such that $[f_v] \neq 0 \in \H^1(K_v,\Z/n\Z)$. We consider two cases:
        \begin{itemize}
            \item If the extension $K(\sqrt[n]{f})/K$ does not ramify at $v$, Theorem \ref{thm reocurrence} contradicts the fact that $f \in \Sha_{\omega}^1(K,\Z/n\Z)$. In the case when $K$ is a finite field extension we need to use Proposition \ref{prop opens are infinite}.
            \item Suppose that the extension $K(\sqrt[n]{f})/K$ ramifies at $v$. Fix a uniformizer $\pi$ of $K$ at $v$. There exist a unit $u$ at $v$ and $r \in \Z$ not divisible by $n$ such that $f = u\pi ^r$. The function $\pi: C(k(v)) \to k(v)$ induced by $\pi$ is a local homeomorphism at $v$ thanks to Proposition \ref{prop val henselian is top henselian} in the case of a function field and Proposition \ref{prop etale are local homeo non finite type} for finite field extensions of $k_0(\!(x,y)\!)$. Which means that there is an open neighbourhood $U$ of $v$ in $C(k(v))$ and $V$ of $0$ in $k(v)$ such that $\pi$ induces a homeomorphism between $U$ and $V$. We may assume that for every $q \in U$ the class of $u(q) \in k(v)^* / k(v)^{*n}$ is constant. On the other hand, as $q \in U$ varies the value $\pi(q)$ goes through all the classes of $k(v)^*/k(v)^{*n}$. Hence there are infinitely many points $q \in C(k(v))$ such that $f$ is invertible at $q$ and $f(q)$ is not a $n$-th root in $k(v)$. This gives an infinite number of places $w$ in $C^{(1)}$ such that $f_w \neq 0 \in H^1(K_w,\Z/n\Z)$. We conclude because this contradicts that $f$ is in $\Sha_{\omega}^1(K,\Z/n\Z)$.
        \end{itemize}
    \end{proof}

    \subsection{Finer reocurrence} \label{sec finer reocurrence}

    In this section we discuss a finer version of the Reocurrence Theorem that gives a good replacement for Frobenius elements. In order to do so we need to introduce a technical condition.
    
    \begin{defi} \label{def many automorphisms}
        Let $\Gamma$ be a profinite group. We say that $\Gamma$ has enough automorphisms if for every finite group $G$, automorphism $\alpha$ of $G$ and pair of surjective and continuous morphisms $f,g: \Gamma \to G$ there exists an automorphism $\beta$ of $\Gamma$ that makes the following diagram commute
        \begin{equation*}
            \begin{tikzcd}
                \Gamma \ar[d,"f"] \ar[r,"\beta"] & \Gamma \ar[d,"g"] \\
                G \ar[r,"\alpha"] & G.
            \end{tikzcd}
        \end{equation*}
    \end{defi}
    Note that in the previous definition we can always take $\alpha$ to be the identity. Indeed, if we can lift the identity, we can lift $\alpha$ by replacing $f$ by $\alpha \circ f$. Let us prove that this definition is not empty.
    \begin{prop} \label{prop hatZ has enough automorphism}
        The profinite group $\widehat{\Z}^n$ has enough automorphisms.
    \end{prop}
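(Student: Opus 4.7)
My plan is first to use the remark following the definition to reduce to the case $\alpha = \id$: I must show that, given surjective continuous morphisms $f, g : \widehat{\Z}^n \to G$, there exists $\beta \in \aut(\widehat{\Z}^n) = \GL_n(\widehat{\Z})$ with $g \circ \beta = f$. Since $\widehat{\Z}^n$ is abelian, $G$ must be finite abelian; letting $m$ be its exponent, both $f$ and $g$ factor through the projection $\widehat{\Z}^n \twoheadrightarrow (\Z/m\Z)^n$, and since the reduction $\GL_n(\widehat{\Z}) \to \GL_n(\Z/m\Z)$ is surjective (by a short Hensel lifting argument at each prime), it suffices to find $\bar \beta \in \GL_n(\Z/m\Z)$ with $\bar g \circ \bar \beta = \bar f$. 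A Chinese Remainder decomposition lets me further assume $G$ is a $p$-group $G \cong \bigoplus_{i=1}^k \Z/p^{e_i}\Z$ with $e_1 \leq \dots \leq e_k = e$ and $k \leq n$, the problem now taking place in $\GL_n(\Z/p^e\Z)$.

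Fix the canonical surjection $\pi : (\Z/p^e\Z)^n \to G$ sending the last $k$ standard basis vectors to the natural generators of the summands and the first $n-k$ to zero. Applying Smith normal form over $\Z_p$ to a lift of $\ker \bar f$, I obtain $\sigma_1 \in \GL_n(\Z/p^e\Z)$ with $\sigma_1(\ker \pi) = \ker \bar f$, so that $\bar f = \alpha_1 \circ \pi \circ \sigma_1^{-1}$ for some $\alpha_1 \in \aut(G)$; the same applied to $\bar g$ produces $\sigma_2$ and $\alpha_2$.

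The main technical step --- and where I expect the bulk of the work --- is to show that the natural map $\stab_{\GL_n(\Z/p^e\Z)}(\ker \pi) \to \aut(G)$ is surjective, i.e.\ that every automorphism of $G$ lifts to an automorphism of $(\Z/p^e\Z)^n$ preserving $\ker \pi$. Representing $\alpha \in \aut(G)$ as a $k \times k$ matrix $(\alpha_{ji})$ acting on the natural generators, the entries $\alpha_{ji} \in \Z/p^{e_j}\Z$ must satisfy the divisibility constraint $p^{\max(0,\, e_j - e_i)} \mid \alpha_{ji}$ for $\alpha$ to respect the orders of the generators. These divisibility conditions lift freely to $\Z/p^e\Z$, so one can pick lifts $\tilde \alpha_{ji}$ of the $\alpha_{ji}$ satisfying the same constraint; the endomorphism $\sigma_\alpha$ of $(\Z/p^e\Z)^n$ acting by the identity on the first $n-k$ basis vectors and by the matrix $(\tilde \alpha_{ji})$ on the last $k$ then preserves $\ker \pi$ by construction, and its invertibility reduces via Nakayama to the invertibility of $\alpha$ modulo $p$.

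Applied to $\alpha := \alpha_2^{-1} \alpha_1$, the lifting statement yields $\tau \in \GL_n(\Z/p^e\Z)$ with $\pi \circ \tau = \alpha_2^{-1} \alpha_1 \circ \pi$. A direct substitution then shows that $\bar \beta := \sigma_2 \tau \sigma_1^{-1}$ satisfies $\bar g \circ \bar \beta = \bar f$, and any lift $\beta \in \GL_n(\widehat{\Z})$ of $\bar \beta$ through the reduction map furnishes the desired automorphism of $\widehat{\Z}^n$.
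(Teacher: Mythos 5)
Your proof is correct, and at its core it follows the same route as the paper: reduce to one prime at a time (primary decomposition of $G$ together with $\widehat{\Z}=\prod_p \Z_p$, or CRT in your formulation) and use the structure theory over the principal ideal domain $\Z_p$ (Smith normal form) to put the kernels of $f$ and $g$ into a common standard position. The differences are twofold. First, you work at the finite level $\GL_n(\Z/p^e\Z)$ and lift at the end through the surjection $\GL_n(\widehat{\Z})\to\GL_n(\Z/m\Z)$, whereas the paper works directly with adapted bases of $\Z_p^n$. Second, and more substantively, you make explicit the step the paper compresses into ``send $e_i$ to $e_i'$'': matching the two adapted bases a priori only identifies the kernels, hence yields $g\circ\beta=\alpha'\circ f$ for some automorphism $\alpha'$ of $G$, and one must still realize this discrepancy (your $\alpha_2^{-1}\alpha_1$) by an automorphism of $(\Z/p^e\Z)^n$ preserving the standard kernel. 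Your lemma that $\stab_{\GL_n(\Z/p^e\Z)}(\ker\pi)\to\aut(G)$ is surjective, proved via the divisibility constraints $p^{\max(0,\,e_j-e_i)}\mid\alpha_{ji}$ and invertibility modulo $p$, is exactly the right statement for this, and it is correct as you argue it. So your write-up is best seen as a more detailed and fully rigorous version of the paper's argument rather than a genuinely different one; what it buys is precisely the justification of the basis-matching step.
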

    \begin{proof}
        Let $p$ be a prime number. Before proving the result for $\widehat{\Z}^n$, let us prove it for $\Z_p^n$. Fix a finite group $G$ and two surjective and continuous morphisms $f,g:\Z_p^n \to G$. As mentioned before, we only need to lift the identity, that is we assume $\alpha = \id_G$. Since $\Z_p$ is a principal ideal domain, we can find a base $e_1,\dots, e_n$ of $\Z_p^n$ such that the morphism $f$ is the quotient by a subgroup of the form $p^{m_1} \Z_p \oplus \dots \oplus p^{m_n} \Z_p$. We can find another basis $e_1',\dots, e_n'$ such that $g$ is of the same form. We can define the automorphism $\beta$ that we are looking for by sending $e_i$ to $e_i'$ for every $i \in \{1,\dots,n\}$. \par 
        Now, let $G$ be a finite group and $f,g: \widehat{\Z}^n \to G$ two surjective and continuous morphisms. We can decompose $G$ in its primary components and $\hat{\Z}$ as the product $\prod_{p \text{ prime}} \Z_p$, which allow us to work one prime at a time. That is to say, we have commutative triangles
        \begin{equation*}
            \begin{tikzcd}[column sep =2ex]
                \Z_p^n \ar[dr,swap,"f|_{\Z_p}"] \ar[rr,"\simeq"] & &\Z_p^n \ar[dl,"g|_{\Z_p}"]\\
                  & G\{ p \} &.
            \end{tikzcd}
        \end{equation*}
        where $G\{p\}$ is the $p$-primary torsion subgroup of $G$. This data glues to give the desired automorphism of $\widehat{\Z}^n$.
    \end{proof}
    \begin{thm} \label{thm finer reocurrence}
        Let $K$ be the function field of a curve over a valued henselian field $k$ whose absolute Galois group has enough automorphisms or a finite extension of $k_0 (\! (x,y)\! )$ such that $\Gamma_{k_0}\times \hat{\Z}$ has enough automorphisms. Let $L/K$ be a finite Galois extension and $v \in C^{(1)}$ unramified in $L/K$.  Then there is an open neighbourhood $U$ of $v$ in $C(k(v))$ such that for every $w \in U$ we have an isomorphism $\phi_{v,w}:\Gamma_{K_{v_0}} \to \Gamma_{K_w}$ together with an element $\rho_w \in \Gamma_K$ such that for every $s \in \Gamma_{K_{v_0}}$, the images of $\phi_{v_0,w}(s)$ and $\rho_w s \rho_w^{-1}$ in $G$ are the same.
    \end{thm}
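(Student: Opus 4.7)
The strategy is to refine Theorem \ref{thm reocurrence} by promoting the conjugation of decomposition groups inside the finite quotient $G$ to a compatible isomorphism of the full absolute Galois groups of the completions. The element $\rho_w$ will be obtained by lifting and adjusting the conjugating element in $G$, while the isomorphism $\phi_{v,w}$ will be produced using the enough-automorphisms hypothesis.

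First, I would apply Theorem \ref{thm reocurrence} to obtain an open neighbourhood $U$ of $v$ in $C(k(v))$ such that, for each $w\in U$, the decomposition groups $G_v, G_w \subset G$ are conjugate by some $\bar\sigma_w\in G$. After fixing a separable closure $\bar K$ of $K$ together with extensions $\bar v, \bar w$ to $\bar K$, this identifies $\Gamma_{K_v}$ and $\Gamma_{K_w}$ with the decomposition subgroups $D_v, D_w \subset \Gamma_K$. Lift $\bar\sigma_w$ to some $\sigma_w\in\Gamma_K$; then $\sigma_w D_v \sigma_w^{-1}$ is a decomposition group above $w$, and using that $\Gamma_L$ acts transitively on extensions of $w$ to $\bar K$, there exists $\tau\in\Gamma_L$ with $\tau\sigma_w D_v\sigma_w^{-1}\tau^{-1} = D_w$. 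Setting $\rho_w := \tau\sigma_w$, the image of $\rho_w$ in $G$ remains $\bar\sigma_w$ because $\tau\in\Gamma_L$.

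Next, to produce $\phi_{v,w}$, I would exploit the enough-automorphisms hypothesis. In type (a) the completions $K_v, K_w$ are Laurent series fields over their residue fields, so $\Gamma_{K_v}$ and $\Gamma_{K_w}$ fit into (split) tame-inertia extensions of the shape $1\to\hat{\Z}(1)\to\Gamma_{K_\bullet}\to\Gamma_{k(\bullet)}\to 1$; a parallel description holds in type (b) with $\Gamma_{k_0}\times\hat{\Z}$ playing the role of the residue-field Galois group. By Proposition \ref{prop degree of place is open}, the residue fields $k(v)$ and $k(w)$ have the same structure, so $\Gamma_{K_v}$ and $\Gamma_{K_w}$ are both isomorphic to a common profinite group $\Gamma$ inheriting the enough-automorphisms property from $\Gamma_k$ (resp.\ $\Gamma_{k_0}\times\hat{\Z}$). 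Applying Definition \ref{def many automorphisms} to the two surjections $\Gamma\twoheadrightarrow G_v$ and $\Gamma\twoheadrightarrow G_w$ (identified via conjugation by $\bar\sigma_w$) yields an automorphism of $\Gamma$ that translates back to the desired isomorphism $\phi_{v,w}\colon\Gamma_{K_v}\to\Gamma_{K_w}$ with the property that $\phi_{v,w}(s)$ and $\rho_w s\rho_w^{-1}$ have the same image in $G$ for all $s$.

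The main obstacle is the propagation of the enough-automorphisms property from $\Gamma_k$ (resp.\ $\Gamma_{k_0}\times\hat{\Z}$) to the absolute Galois group of the completion, i.e.\ across the tame ramification short exact sequence. This requires analyzing how lifts of automorphisms modulo the inertia behave with respect to the cyclotomic action, and should proceed prime by prime in the spirit of Proposition \ref{prop hatZ has enough automorphism}. It is precisely this structural fact that dictates the specific form taken by the two base-case hypotheses in the statement.
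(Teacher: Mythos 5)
Your construction of $\rho_w$ (lifting the conjugating element of $G$ and adjusting by an element of $\Gamma_L$) and your reduction of the problem to finding an isomorphism $\phi_{v,w}\colon\Gamma_{K_v}\to\Gamma_{K_w}$ compatible, via conjugation, with the two surjections onto the decomposition groups agree with the paper. The genuine gap is precisely the step you yourself flag as ``the main obstacle'': you want to apply the enough-automorphisms property to the full absolute Galois group of the completion, i.e.\ to $\Gamma_{K_v}\simeq \Gamma_k\times\hat{\Z}$ in type (a) (resp.\ $\Gamma_{k_0}\times\hat{\Z}^2$ in type (b)), whereas the hypothesis of the theorem only provides it for $\Gamma_k$ (resp.\ for $\Gamma_{k_0}\times\hat{\Z}$), that is, for the Galois group of the \emph{residue} field, not of the local field. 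Nothing in the paper, and nothing in your argument, shows that the enough-automorphisms property propagates across the inertia extension or even under taking a product with $\hat{\Z}$ --- Proposition \ref{prop hatZ has enough automorphism} only treats $\hat{\Z}^n$, and such stability is far from formal --- so as written the proof is incomplete at its crucial point.

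The paper sidesteps this propagation problem by exploiting unramifiedness instead: since $L/K$ is unramified at $v$ and at the places $w\in U$, the inertia parts of $\Gamma_{K_v}$ and $\Gamma_{K_w}$ map trivially into $G$, so the surjections $\Gamma_{K_v}\to G_v$ and $\Gamma_{K_w}\to G_w$ remain surjective after restriction to the residue-field factor $\Gamma_{k(v)}\simeq\Gamma_k$ (type (a)) resp.\ $\Gamma_{k_0}\times\hat{\Z}$ (type (b)). Definition \ref{def many automorphisms} is then applied to these \emph{restricted} surjections --- exactly where the stated hypothesis applies --- yielding an automorphism $\beta$ of the residue factor, and $\phi_{v,w}$ is defined as $\beta$ extended by the identity on the inertia factor under the identifications $\Gamma_{K_v}\simeq\Gamma_k\times\hat{\Z}$ (resp.\ $\Gamma_{k_0}\times\hat{\Z}^2$); the required equality of images in $G$ holds because inertia contributes nothing to $G$. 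If you replace your propagation claim by this restriction-then-extension step, the rest of your argument goes through.
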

    \begin{proof}
       We can apply Theorem \ref{thm reocurrence} to get an open neighbourhood $U$ of $v$ in $C(k(v))$ such that the for every $w \in U$ the decomposition groups of $v$ and $w$ are conjugates. If we shrink $U$ we can assume that the residue fields at $v$ and $w$ are isomorphic. For $w \in U$ take $\rho_w \in G$ such that $G_{v} = \rho_w G_{w}\rho_{w}^{-1}$. Let $f: \Gamma_K \to G$ be the surjective morphism corresponding to the extension $L/K$ and $f_v:\Gamma_{K_v} \to G$ its restriction to $\Gamma_{K_v}$, similarly for $w$. Then the question is reduced to finding an isomorphism $\phi:\Gamma_{K_v} \to \Gamma_{K_w}$ making the following diagram commute 
       \begin{equation*}
           \begin{tikzcd}
               \Gamma_{K_v}\ar[d] \ar[r,"\phi"] & \Gamma_{K_w} \ar[d] \\
               G_v \ar[r,"c_{\rho_w}"] & G_w
           \end{tikzcd}
       \end{equation*}
       where $c_{\rho_w}$ is conjugation by $\rho_{w}$. Indeed, if we are able to find such an isomorphism, we would have $\rho_{w}f_v(s)\rho_w^{-1} = f_w(\phi(s))$ and lifting $\rho_{w}$ to $\Gamma_K$ would give the result. \par
       Now, note that if $K$ is of type (a), we have $\Gamma_{K_v} \simeq \Gamma_k \times \widehat{\Z}$ and the vertical arrows in the previous diagram are still surjective after restricting to the factor $\Gamma_k$ because the extension is unramified at $v$ and $w$. So we can conclude, for type (a), using that $\Gamma_k$ has enough automorphisms. For type (b) we may use the same argument after noticing that the local fields have absolute Galois group isomorphic to $\Gamma_k \times \hat{\Z}^2$.
   \end{proof}
   In particular, by Proposition \ref{prop hatZ has enough automorphism} we can apply the theorem to function fields of curves over $\C(\!(t_1)\!) \cdots (\!(t_n)\!)$ or finite field extensions of $\C(\!(t_1)\!) \cdots (\!(t_n)\!)(\!(x,y)\!)$. We do not know if Theorem \ref{thm finer reocurrence} is valid for function fields over $p$-adic fields. It would be interesting to determine whether the absolute Galois group of a $p$-adic field has enough automorphisms or to find an example of a non-commutative absolute Galois group that has enough automorphisms.

    \section{Very weak approximation} \label{section very weak approximation}

        In this section we present an application of Theorem \ref{thm reocurrence} to very weak approximation over some function fields and some Laurent series fields. We follow the the proof of \cite[Theorem 4.1]{Lu-Nef-Dem} and adapt it to our setting. There are new difficulties to overcome because Chebotarev's density Theorem must be replaced by Theorem \ref{thm reocurrence} or by Theorem \ref{thm finer reocurrence}. \par
        
        Let $\C$ be an algebraically closed field of characteristic zero. In this section $K$ is a field of one of the following types 
        \begin{enumerate}
            \item[(a')] A function field of a curve over $\C(\!(t)\!)$;
            \item[(b')] A finite field extension of $\C(\!(x,y)\!)$.
        \end{enumerate} 
        Similarly to previous section, for type (a') we denote by $C$ the curve whose function field is $K$. In case (b') we change the notation slightly, $C$ denotes the scheme $\Spec A \setminus \mathfrak{m}$ where $A$ is the integral closure of $\C[\![x,y]\!]$ in $K$ and $\mathfrak{m}$ is the maximal ideal of $A$. Hoping it does not lead to confusion, we still call $C$ the Dedekind scheme associated to $K$.

    \subsection{Reocurrence and cohomology}
        We begin by adapting the results in \cite[Section 3]{Lu-Nef-Dem} to our context. Let us start with a basic lemma.
        \begin{lemma}
        Let $\widetilde{K}$ be the compositum of all finite extensions $L/K$ that are totally split at every $v \in C^{(1)}$ in a fixed algebraic closure. The extension $\widetilde{K}/K$ is Galois.
        \end{lemma}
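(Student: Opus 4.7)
The plan is to verify that $\widetilde{K}/K$ is Galois by checking separability and normality directly. Since $K$ has characteristic zero in both cases (a') and (b'), every algebraic extension is automatically separable, so the compositum $\widetilde{K}/K$ is separable. It therefore remains to show that $\widetilde{K}$ is normal over $K$, i.e.\ that it is stable under the natural action of $\Gamma_K$ on the fixed algebraic closure $\overline{K}$.

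To establish $\Gamma_K$-stability I would fix $\sigma \in \Gamma_K$ together with a finite extension $L \subseteq \widetilde{K}$ totally split at every $v \in C^{(1)}$, and show that the conjugate $\sigma(L)$ enjoys the same property. The key observation is that ``$L$ is totally split at $v$'' is equivalent to the \'etale $K_v$-algebra $L\otimes_K K_v$ being isomorphic to $K_v^{[L:K]}$, a condition depending only on the $K$-algebra structure of $L$. Since $\sigma$ restricts to a $K$-algebra isomorphism $L \xrightarrow{\sim} \sigma(L)$, tensoring over $K$ with $K_v$ yields a $K_v$-algebra isomorphism
\[
L \otimes_K K_v \xrightarrow{\sim} \sigma(L) \otimes_K K_v,
\]
so the total splitting at $v$ transfers from $L$ to $\sigma(L)$. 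Letting $v$ vary gives $\sigma(L)\subseteq \widetilde{K}$, and letting $L$ vary over the (directed) collection of finite totally-split-everywhere subextensions of $\widetilde K$ yields $\sigma(\widetilde{K})\subseteq \widetilde{K}$; the same argument applied to $\sigma^{-1}$ gives the reverse inclusion, so $\widetilde{K}$ is normal, hence Galois over $K$.

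I do not anticipate any serious obstacle: the entire argument reduces to the elementary fact that being totally split at a given place is invariant under $K$-algebra isomorphism. One alternative worth noting is to pass to Galois closures: a finite extension $L/K$ is totally split at $v$ if and only if its Galois closure $L'/K$ is, since the double-coset description of places in $L$ forces the decomposition groups in $\Gal(L'/K)$ above $v$ to be contained in every conjugate of $\Gal(L'/L)$ and hence to be trivial. From this viewpoint $\widetilde{K}$ is equivalently the compositum of all finite Galois subextensions totally split everywhere, and being a compositum of Galois extensions is directly Galois.
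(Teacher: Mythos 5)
Your proof is correct and takes essentially the same route as the paper: both establish normality (separability being automatic in characteristic zero) by showing that a $\Gamma_K$-conjugate of a totally split extension is again totally split at every $v \in C^{(1)}$. The paper phrases this at the level of elements — the minimal polynomial of $x \in \widetilde{K}$ splits into linear factors in each $K_v[T]$, so every conjugate of $x$ generates a totally split extension — while you phrase it via the invariance of $L\otimes_K K_v \cong K_v^{[L:K]}$ under the $K$-algebra isomorphism $L \xrightarrow{\sim} \sigma(L)$, which is the same argument in slightly different packaging.
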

        \begin{proof}
            Let $x$ be an element of $\widetilde{K}$ and $p \in K[T]$ its minimal polynomial. By assumption the extension $K(x)$ is totally split at every $v \in C^{(1)}$. Then for every $v \in C^{(1)}$ the polynomial $p$ splits into linear factors in $K_{v}[T]$. This implies that the extension $K(y)$ where $y$ is a Galois conjugate of $x$ is totally split at every place $v$ in $C^{(1)}$. So we conclude that $\Tilde{K}/K$ is Galois.
        \end{proof}
        \begin{prop}\label{prop Sha in the image of inf}
            Let $A$ be a finite abelian $K$-group and $L/K$ a finite extension splitting $A$. For every finite subset $S \subseteq C^{(1)}$, the group $\Sha^1_{S}(K,A)$ is contained in the image of the inflation map from $\H^1(\widetilde{L}/K, A )$.
        \end{prop}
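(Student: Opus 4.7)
The plan is to use inflation--restriction to reduce the statement to showing that a specific finite abelian extension $N/L$ sits inside $\widetilde L$, and then to handle places outside $S$ by a direct cocycle computation and places inside $S$ via Theorem \ref{thm reocurrence}. As a preliminary reduction, I would note that if $L_0/K$ denotes the minimal Galois extension splitting $A$, then $L_0 \subseteq L$, and a simple tensor-product computation (as in the discussion of $\widetilde K$ being Galois preceding this proposition) gives $\widetilde{L_0} \subseteq \widetilde L$, so $\Inf(\H^1(\widetilde{L_0}/K,A)) \subseteq \Inf(\H^1(\widetilde L/K,A))$ inside $\H^1(K,A)$. It therefore suffices to prove the proposition with $L$ replaced by $L_0$, which allows me to assume $L/K$ is Galois.

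Fix a cocycle $c$ representing a class $[c] \in \Sha^1_S(K,A)$. Since $\Gamma_L$ acts trivially on $A$, the restriction $\phi_0 := c|_{\Gamma_L}$ is a group homomorphism $\Gamma_L \to A$; let $N$ be the fixed field of $\ker \phi_0$, so that $N/L$ is a finite abelian extension. The cocycle identity, combined with $L/K$ Galois and the triviality of the $\Gamma_L$-action on $A$, yields $c(g s g^{-1}) = g \cdot c(s)$ for $g \in \Gamma_K$ and $s \in \Gamma_L$, so $\ker\phi_0$ is normal in $\Gamma_K$ and $N/K$ is itself Galois. Noting that $\widetilde L/K$ is Galois (by the same conjugation argument as in the lemma preceding the proposition) and that $\Gamma_{\widetilde L} \subseteq \Gamma_L$ acts trivially on $A$, inflation--restriction shows that $[c]$ lies in $\Inf(\H^1(\widetilde L/K,A))$ if and only if $c|_{\Gamma_{\widetilde L}}=0$, equivalently if and only if $\Gamma_{\widetilde L} \subseteq \Gamma_N$, that is, if and only if $N\subseteq \widetilde L$. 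Thus the goal reduces to showing that $N/L$ is totally split at every place of $L$.

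To establish this, I would enlarge $S$ to include the finitely many places of $K$ ramified in $N/K$, which only weakens the hypothesis $[c]\in\Sha^1_S$. For a place $w$ of $L$ above $v \notin S$, the local class $c|_{\Gamma_{K_v}}$ is a coboundary and $\Gamma_{L_w}$ acts trivially on $A$, so $\phi_0(\Gamma_{L_w})=0$ and $w$ is totally split in $N/L$. For $w$ above $v \in S$, the enlargement of $S$ ensures that $v$ is unramified in $N/K$, so Theorem \ref{thm reocurrence} produces a place $v' \in C^{(1)}\setminus S$ whose decomposition group $D_{v'} \subset \Gal(N/K)$ is a conjugate $h D_v h^{-1}$ of that of $v$. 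Because $\Gal(N/L)$ is normal in $\Gal(N/K)$, one obtains $D_{v'} \cap \Gal(N/L) = h\bigl(D_v \cap \Gal(N/L)\bigr)h^{-1}$; the case $v' \notin S$ already shows that the left-hand side vanishes, whence $D_v \cap \Gal(N/L) = 0$ and $w$ is again totally split in $N/L$.

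The step handling $v \in S$ is the main obstacle. In the number-field analogue of \cite{Lu-Nef-Dem}, Chebotarev's Theorem freely realizes any Frobenius conjugacy class, and this makes the corresponding argument essentially automatic; in our geometric setting no such Frobenius element is available, and Theorem \ref{thm reocurrence} only supplies a place with a conjugate decomposition group. The argument sketched above shows that this coarser information is nevertheless sufficient, provided one exploits the normality of $\Gal(N/L)$ in $\Gal(N/K)$ so that conjugation becomes an automorphism preserving $\Gal(N/L)$, allowing the vanishing of $D_{v'}\cap\Gal(N/L)$ to be transported to $D_v\cap\Gal(N/L)$.
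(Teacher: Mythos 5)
Your reduction of the statement to showing $N \subseteq \widetilde{L}$ (via inflation--restriction, using that $\Gamma_{\widetilde{L}}\subseteq\Gamma_L$ acts trivially on $A$, and the preliminary passage to the minimal Galois splitting field), as well as your treatment of the places $v \notin S$, are correct. The gap is in the case $v \in S$: you assert that ``the enlargement of $S$ ensures that $v$ is unramified in $N/K$'', but enlarging $S$ to contain the ramified places of $N/K$ achieves exactly the opposite --- after the enlargement every ramified place lies \emph{inside} $S$, and nothing prevents the originally given places of $S$ (where the hypothesis $[c]\in\Sha^1_S$ gives no local information) from being ramified in $N/K$. For such a $v$, Theorem \ref{thm reocurrence} does not apply, so the step producing a place $v'\notin S$ with conjugate decomposition group collapses. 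This is not a removable technicality: it is precisely the hard case, since over these fields there exist extensions that are locally trivial away from their ramification locus (so $\Sha^1(K,\Z/n\Z)$ can be nonzero), which is why the paper stresses, right after Theorem \ref{thm reocurrence}, that the Reocurrence Theorem alone says nothing at ramified places.

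The missing ingredient is Proposition \ref{prop Shaomega equal Sha}, whose proof contains a separate argument for ramified places (using a uniformizer as a local coordinate); the paper's proof of the present proposition uses it after restricting $\alpha$ to $\widetilde{L}$ and descending to a finite level $M$ between $L$ and $\widetilde{L}$. You can repair your argument in the same spirit without changing its shape: the restriction $c|_{\Gamma_L}\in\Hom(\Gamma_L,A)=\H^1(L,A)$ is trivial at every place of $L$ lying over $C^{(1)}\setminus S$, hence at all but finitely many places of $L$; since $A$ is a constant module over $L$ (a product of groups $\Z/n\Z$) and $L$ is again a field of the allowed type, Proposition \ref{prop Shaomega equal Sha} applied over $L$ shows $c|_{\Gamma_L}$ is trivial at \emph{every} place of $L$, i.e.\ $N/L$ is totally split everywhere, including above $S$. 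With that substitution in place of the appeal to Theorem \ref{thm reocurrence} at $v\in S$, your proof becomes correct and is essentially a reorganization, at the level of $L$, of the paper's argument through $\widetilde{L}$.
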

        \begin{proof}
            Let $D$ be the Dedekind scheme associated to $L$. Note that for every $w \in D^{(1)}$ we can define a restriction morphism $\H^1(\widetilde{L},A) \to \H^1(L_w,A)$. Indeed, the absolute Galois group of $L_w$ is a subgroup of the absolute Galois group of $\widetilde{L}$ because $\widetilde{L} \subseteq L_w$. We then consider the following commutative diagram with exact rows
            \begin{equation}\label{diag Inf res}
                \begin{tikzcd}
                    \H^1(\widetilde{L}/K,A) \ar[r,"\Inf"]& \H^1(K,A) \ar[r,"\Res"] \ar[d]& \H^1(\widetilde{L},A) \ar[d] \\
                     & \prod_{v \in C^{(1)}} \H^1(K_v,A) \ar[r,"\prod \Res_v"] &\prod_{v \in D^{(1)}} \H^1(L_w,A).
                \end{tikzcd}
            \end{equation}
        Note that $\H^1(\widetilde{L},A) = \mathrm{colim}_{M/L} \; \H^{1}(M,A)$ where the colimit is over all finite extensions $M/L$ that are trivial at every $v$ in $C^{(1)}$. Then for any class of $\beta \in \H^1(\widetilde{L},A)$ there is a finite Galois extension $M/L$ and $\beta' \in \H^1(M,A)$ representing $\beta$.
        Using Proposition \ref{prop Shaomega equal Sha}, we deduce that a class in $\H^1(\widetilde{L},A)$ that is trivial at all but finitely many places in $D^{(1)}$ is trivial at every place in $D^{(1)}$. \par 
        Let $\alpha$ be an element of $\Sha^1_{S}(K,A)$. The last paragraph proves that $\beta = \Res (\alpha)$ in $\H^1(\Tilde{L},A)$ is trivial at every place in $D^{(1)}$. Since $L$ splits $A$, we can think of $\beta$ as a morphism $\beta: \Gamma_{\widetilde{L}} \to A$. To this morphism corresponds a finite extension $M/\widetilde{L}$ trivial at every $w \in D^{(1)}$. This means that $\beta$ is trivial because every element of $M$ lies in a finite locally trivial extension of $L$ and $\widetilde{L}$ is the compositum of all those extensions. We conclude by exactness of the top row of diagram \eqref{diag Inf res} that $\alpha$ is in the image of the inflation map.
        \end{proof}
        \begin{defi} \label{def Bad places}
            Let $F$ be a finite $K$-group and $L/K$ the minimal extension splitting $F$. We denote by $\Bad_F$ the set of places where the extension $L/K$ ramifies.
        \end{defi}
         In contrast with \cite[Definition 2.1]{Lu-Nef-Dem}, we do not have to worry about the primes dividing the order of the group because our residue fields have characteristic zero.
        
        \begin{lemma}\label{lemma reocurence cohomology}
            Let $F$ be a finite $K$-group and $v \not \in \Bad_F$. Then there exists infinitely many places $w \in C^{(1)}$ together with isomorphisms
            \[
            \rho^*_{v,w}: \H^1(K_v,F) \to \H^1(K_w,F).
            \]
        \end{lemma}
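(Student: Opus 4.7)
The plan is to feed the Galois extension $L/K$ splitting $F$ (which by hypothesis $v \not\in \Bad_F$ is unramified at $v$) into Theorem \ref{thm finer reocurrence}. Note that our setting falls under the hypotheses of that theorem: in case (a') the residue field $\C(\!(t)\!)$ has absolute Galois group $\widehat{\Z}$, and in case (b') we need $\Gamma_\C \times \widehat{\Z} = \widehat{\Z}$ to have enough automorphisms; both cases are covered by Proposition \ref{prop hatZ has enough automorphism}. Thus Theorem \ref{thm finer reocurrence} produces an open neighbourhood $U$ of $v$ in $C(k(v))$ together with, for each $w \in U$, an isomorphism $\phi_{v,w}: \Gamma_{K_v} \to \Gamma_{K_w}$ and an element $\rho_w \in \Gamma_K$ such that $\phi_{v,w}(s)$ and $\rho_w s \rho_w^{-1}$ have the same image in $G := \Gal(L/K)$ for every $s \in \Gamma_{K_v}$.

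The main step is to promote this pair $(\phi_{v,w}, \rho_w)$ into an isomorphism at the level of cohomology sets. Since $L$ splits $F$, the action of $\Gamma_K$ on $F$ factors through $G$; in particular, the element $\rho_w$ acts on $F$ via a group automorphism $\alpha_w: F \to F$. The compatibility coming from Theorem \ref{thm finer reocurrence} gives, for every $s \in \Gamma_{K_v}$ and $f \in F$, the identity
\[
\alpha_w(s \cdot f) = \rho_w s \cdot f = (\rho_w s \rho_w^{-1}) \cdot \alpha_w(f) = \phi_{v,w}(s) \cdot \alpha_w(f),
\]
where the last equality uses that $\phi_{v,w}(s)$ and $\rho_w s \rho_w^{-1}$ have identical image in $G$ and $F$'s action is through $G$. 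Thus $\alpha_w$ is equivariant between the $\Gamma_{K_v}$-action on the source and the $\Gamma_{K_w}$-action (pulled back by $\phi_{v,w}^{-1}$) on the target.

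Given this equivariance, one builds $\rho^*_{v,w}: \H^1(K_v,F) \to \H^1(K_w,F)$ in the usual way: send a cocycle $c$ to the cocycle $t \mapsto \alpha_w\bigl(c(\phi_{v,w}^{-1}(t))\bigr)$. A short verification using the cocycle identity shows this is well-defined and respects the coboundary relation (in the possibly non-abelian case, this is the place where care is needed; it is routine once the equivariance above is in hand). The construction is functorially reversible by using $\alpha_w^{-1}$ and $\phi_{v,w}^{-1}$, yielding an isomorphism.

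Finally, for the infiniteness of the set of such $w$: by the remark following Theorem \ref{thm reocurrence}, the neighbourhood $U \subseteq C(k(v))$ is uncountable (and in any event infinite). Since the natural map $C(k(v)) \to C^{(1)}$ has finite fibres (each closed point of $C$ with residue field contained in $k(v)$ contributes at most $[k(v):k]$ many $k(v)$-points), the image of $U$ in $C^{(1)}$ still consists of infinitely many places, each equipped by construction with the desired isomorphism $\rho^*_{v,w}$. The only genuine subtlety along the way is ensuring the equivariance identity above in the non-abelian case — which is exactly the content of the finer statement of Theorem \ref{thm finer reocurrence} compared to Theorem \ref{thm reocurrence}.
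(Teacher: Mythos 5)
Your proposal is correct and follows essentially the same route as the paper: apply Theorem \ref{thm finer reocurrence} to a Galois extension splitting $F$ (unramified at $v$ since $v \notin \Bad_F$), and combine the automorphism of $F$ induced by $\rho_w$ with pullback along $\phi_{v,w}^{-1}$ to get the cohomological isomorphism. The paper packages this as the composite $\H^1(K_v,F) \xrightarrow{\rho_w^*} \H^1(K_v,{}_{\rho_w}F) \xrightarrow{\phi_{v,w}^*} \H^1(K_w,F)$ via the twisted group ${}_{\rho_w}F$, which is exactly your explicit cocycle formula $t \mapsto \alpha_w\bigl(c(\phi_{v,w}^{-1}(t))\bigr)$; your verification of the ``enough automorphisms'' hypothesis and of infiniteness via finite fibres of $C(k(v)) \to C^{(1)}$ are fine added details.
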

        \begin{proof}
            Let $L/K$ be a Galois extension splitting the group $F$. Using Theorem \ref{thm finer reocurrence} we get infinitely many places $w$ in $C^{(1)}$ and elements $\rho_w \in \Gamma_K$ such that the image of $\phi_{v,w}(s)$ and $\rho_w s \rho_w^{-1}$ in $\Gal(L/K)$ are the same for every $s \in \Gamma_{K_v}$. \par 
            Denote by ${}_{\rho_w}F$ the $K$-group whose underlying group is the same as $F$, but the action is given by $s \cdot f := {}^{\rho_{w} s \rho^{-1}_w}f$. With this definition, the map from $F$ to ${}_{\rho_w}F$ given by $f \mapsto {}^{\rho_w}f$ is an isomorphism of $K$-groups, in particular, it induces a isomorphism of pointed sets $\rho_{w}^*: \H^1(K_v,F) \to \H^1(K_v,{}_{\rho_w}F)$.\par
            We can apply Theorem \ref{thm finer reocurrence} to the extension $L/K$ and get the desired isomorphisms as the following compositions 
            \begin{equation} \label{eq rhovw}
            \H^1(K_v,F) \xrightarrow{\rho_w^*} \H^1(K_v, {}_{\rho_{w}} F) \xrightarrow{\phi_{v,w}^*} \H^1(K_w,F).
            \end{equation}
            Note that $\phi_{v,w}$ induces a morphism on the cohomology sets because $\rho_w s \rho_w^{-1}$ and $\phi_{v,w}(s)$ have the same action on $F$.
        \end{proof}
        \begin{lemma} \label{lemma cocylces fun calculation}
            Let $F,v,w,\rho_{v,w}^*$ be as in Lemma \ref{lemma reocurence cohomology}. Let $L/K$ a finite extension splitting $F$. Take $\alpha \in \H^1(K_v,F)$ and suppose that there exists a class $\beta = [b] \in \H^1(K,F)$ such that $\beta_v = \alpha$ and $\beta_w = \rho_{v,w}^*\alpha$. Additionally suppose that the morphism $\Gamma_{L} \to F$ obtained by restricting $b$ is surjective. If we denote by $L'$ the extension corresponding to the kernel of $b|_{\Gamma_L}$, the decomposition groups of $v$ and $w$ are conjugate in $\Gal(L'/K)$
        \end{lemma}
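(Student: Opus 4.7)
The plan is to realize $\Gal(L'/K)$ explicitly as a semidirect product $F \rtimes \Gal(L/K)$, read off the decomposition subgroups of $v$ and $w$ inside it, and then produce the conjugating element by hand. Without loss of generality one can replace $L$ by its Galois closure over $K$, which still splits $F$, so $L/K$ is Galois.

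First I would check that $N := \ker(b|_{\Gamma_L})$ is normal in $\Gamma_K$, so that $L'/K$ is Galois. For $g\in\Gamma_K$ and $n\in N$, the cocycle identity together with $b(n)=1$ and the triviality of the $\Gamma_L$-action on $F$ gives $b(gng^{-1}) = b(g)\cdot {}^g b(g^{-1}) = b(e) = 1$; combined with $gng^{-1}\in\Gamma_L$ this yields $gng^{-1}\in N$. Next I would verify that the map
\[
\psi:\Gamma_K\longrightarrow F\rtimes\Gal(L/K),\qquad g\longmapsto \bigl(b(g),\,g\Gamma_L\bigr),
\]
is a group homomorphism with kernel $N$ (a direct check from the cocycle relation and the fact that the $\Gamma_K$-action on $F$ factors through $\Gal(L/K)$). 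The hypothesis that $b|_{\Gamma_L}$ is surjective, together with the surjection $\Gamma_K\twoheadrightarrow\Gal(L/K)$, forces $\psi$ to be surjective, so $\psi$ induces an isomorphism $\Gal(L'/K)\simeq F\rtimes\Gal(L/K)$.

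Fixing compatible embeddings $\Gamma_{K_v},\Gamma_{K_w}\hookrightarrow\Gamma_K$ and writing $b_v,b_w$ for the corresponding restrictions of $b$, the decomposition groups at $v$ and $w$ in $\Gal(L'/K)$ then correspond under $\psi$ to
\[
H_v := \bigl\{\bigl(b_v(s),\bar s\bigr) : s\in\Gamma_{K_v}\bigr\} \quad\text{and}\quad H_w := \bigl\{\bigl(b_w(t),\bar t\bigr) : t\in\Gamma_{K_w}\bigr\},
\]
where the bar denotes the image in $\Gal(L/K)$.

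The last step is a mechanical computation in the semidirect product. Unpacking the assumption $[b_w]=\rho_{v,w}^{*}[b_v]\in\H^1(K_w,F)$ yields an element $f_0\in F$ such that $b_w(\phi_{v,w}(s))=f_0\cdot{}^{\bar\rho_w}b_v(s)\cdot{}^{\overline{\phi_{v,w}(s)}}f_0^{-1}$ for every $s\in\Gamma_{K_v}$. Combining this with the defining property $\overline{\phi_{v,w}(s)}=\bar\rho_w\,\bar s\,\bar\rho_w^{-1}$ provided by Theorem \ref{thm finer reocurrence} and multiplying out in $F\rtimes\Gal(L/K)$ shows that conjugation by $(f_0,\bar\rho_w)$ sends $H_v$ bijectively onto $H_w$, which is exactly the conclusion. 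The only real hurdle is bookkeeping: keeping the conventions for the non-abelian coboundary and the semidirect-product multiplication aligned; there is no substantive difficulty once that is fixed.
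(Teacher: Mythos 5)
Your argument is correct and, at bottom, produces the same conjugating element as the paper, but it is packaged differently. The paper stays at the level of cocycles: it first normalizes the element $\rho_w$ coming from Theorem \ref{thm finer reocurrence} so that $b_{\rho_w}=1$ (this is where it uses surjectivity of $b|_{\Gamma_L}$), writes the two coboundary conditions $\beta_v=\alpha$ and $\beta_w=\rho_{v,w}^*\alpha$ with two elements $f,f'\in F$, lifts them to $\chi,\chi'\in\Gal(L'/L)$ via $b$, and checks by a cocycle computation that $\rho_0=\chi'^{-1}\rho_w\chi$ conjugates the image of $\Gamma_{K_v}$ onto that of $\Gamma_{K_w}$ in $\Gal(L'/K)$. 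You instead make the structure of $\Gal(L'/K)$ explicit as $F\rtimes\Gal(L/K)$ via $g\mapsto(b_g,\bar g)$ --- surjectivity of $b|_{\Gamma_L}$ enters exactly here, playing the role of the paper's normalization trick --- you represent $\alpha$ by $b_v$ so that a single coboundary element $f_0$ suffices, and you exhibit the conjugator $(f_0,\bar\rho_w)$ by a one-line computation in the semidirect product. The two proofs consume the same three inputs (the relation $\overline{\phi_{v,w}(s)}=\bar\rho_w\,\bar s\,\bar\rho_w^{-1}$ in $\Gal(L/K)$, the coboundary relation between $b_w\circ\phi_{v,w}$ and ${}^{\rho_w}b_v$, and surjectivity of $b|_{\Gamma_L}$); yours trades the paper's $\chi,\chi'$ gymnastics for the explicit splitting of $\Gal(L'/K)$, which makes the bookkeeping more transparent.

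One small caveat: the opening reduction ``replace $L$ by its Galois closure'' is not harmless as stated, because shrinking $\Gamma_L$ may destroy the surjectivity of $b|_{\Gamma_L}$, and it changes the field $L'$ appearing in the conclusion. The hypothesis actually needed (and tacitly used in the paper, which invokes normality of $\Gal(L'/L)$ in $\Gal(L'/K)$) is that $L/K$ is Galois; this is automatic in the intended application, where $L$ is the minimal extension splitting the group. So you should simply assume $L/K$ Galois rather than reduce to it; with that adjustment your proof is complete.
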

        The following proof is a rephrasing of the proof of \cite[Lemma 3.3]{Lu-Nef-Dem}.
        \begin{proof}
            Note that the extension $L'/K$ is Galois thanks to \cite[Lemma 3.4]{Lu-Nef-Dem}. \par 
            There is an element $\rho_w \in \Gamma_K$ such that for every $s \in \Gamma_{K_v}$ the elements $\rho_w s \rho^{-1}_w$ and $\phi_{v,w}(s)$ have the same image in $\Gal (L/K)$. To simplify notation we will write $\rho$ instead of $\rho_w$. We begin the proof with the following claim\par
            \vspace*{1ex} \hspace*{2ex}\textbf{Claim:} We can choose $\rho$ such that $b_{\rho} = 1$  \vspace*{1ex} \newline
           For every $\chi \in \Gamma_L$ we have that $b_{\chi\rho} = b_{\chi} b_{\rho}$ because $\chi$ acts trivially on $F$. Since the restriction of $b$ to $\Gamma_L$ is surjective, there exists an element $\chi_0$ such that $b_{\chi_0} = b_{\rho}^{-1}$. Then after replacing $\rho$ by $\chi_0 \rho$ we have $b_{\rho} =1$ and the images of $\phi_{v,w}(s)$ and $\rho s \rho^{-1}$ in $\Gal(L/K)$ still coincide. \par 

           Now, according to the proof of Lemma \ref{lemma reocurence cohomology}, the morphism $\rho^*_{v,w}$ is given by the composition of $\rho_w^*$ and $\phi_{v,w}^*$. At the level of cocycles this means that for $\alpha =[a] \in H^1(K_v,F)$ the class $\rho_{v,w}^* \alpha$ is represented by the cocycle
           \[
                a'_{t} = {}^{\rho} a_{\phi_{v,w}^{-1}(t)}.
            \]
            Since $\beta_v = \alpha$ and $\beta_w = \rho_{v,w}^*\alpha$, there exists $f,f' \in F$ such that for every $s \in \Gamma_v$ we have
            \begin{equation*}
                {}^{\rho}(f b_{s} {}^{s} f^{-1}) = {}^{\rho} a_{s} = a_{\phi_{v,w}(s)}' = f' b_{\phi_{v,w}(s)}{}^{\phi_{v,w}(s)} f'^{-1}.
            \end{equation*}
            The restriction of $b$ to $\Gamma_L$ gives an isomorphism between $F$ and $\Gal(L'/L)$, hence there exist unique  $\chi$ and $\chi'$ in $\Gal(L'/L)$ such that $b_{\chi} = f$ and $b_{\chi'} = f'$. Therefore
            \begin{equation}\label{calculo cociclos}
            {}^{\rho}b_{\chi}{}^{\rho} b_{s} {}^{\rho s} b_{\chi}^{-1} = b_{\chi'} b_{\phi_{v,w}(s)} {}^{\phi_{v,w}(s)}b_{\chi'}^{-1}.
            \end{equation}
            Fix $\rho_0:=\chi'^{-1} \rho \chi$. We claim that the images of $\phi_{v,w}(s)$ and $\rho_0 s \rho_0^{-1}$ in $\Gal(L'/K)$ coincide for every $s\in \Gamma_{K_v}$. Since $\chi$ and $\chi'$ are elements of $\Gamma_L$, the images of $\rho_0 s \rho^{-1}_0$ and $\phi_{v,w}(s)$ in $\Gal(L/K)$ coincide. In particular, they have the same action on $F$. Note that $b_{\rho_0} = b_{\chi'}^{-1}{}^{\rho}b_{\chi}$ since $b_{\rho} =1$ and $\chi'$ acts trivially. Hence equality \eqref{calculo cociclos} gives 
            \[
            b_{\rho_0} {}^{\rho}b_{s} {}^{\phi_{v,w}(s)}b_{\rho_0}^{-1} = b_{\phi_{v,w}(s)}.
            \]
            The subgroup $\Gal(L'/L)$ of $\Gal(L'/K)$ is normal because the extension $L/K$ is Galois, so there exists $\psi \in \Gal(L'/L)$ such that $\rho_0 s \rho_0^{-1} = \psi \phi_{v,w}(s)$ in $\Gal(L'/K)$. After applying $b$ to the previous equality, a direct calculation gives
            \[
            b_{\rho_0} {}^{\rho}b_s {}^{\phi_{v,w}(s)} b_{\rho_0}^{-1} =b_{\psi} b_{\phi_{v,w}(s)}.
            \]
            Indeed, we have 
            \begin{align*}
                b_{\rho_0 s \rho_0^{-1}} &= b_{\rho_0} {}^{\rho_0}b_s{}^{\rho_0 s} b_{\rho_0^{-1}} \\ 
                &= b_{\rho_0} {}^{\rho}b_s {}^{\phi_{v,w}(s) \rho_0}b_{\rho_0^{-1}} \\ 
                & = b_{\rho_0} {}^{\rho}b_s {}^{\phi_{v,w}(s)} b_{\rho_0}^{-1}
            \end{align*}
            because $\phi_{v,w}(s)$ and $\rho_0 s \rho_0^{-1}$ have the same action on $F$. This implies that $b_{\psi} = 1$ and then $\psi = 1$ since $b$ induces an isomorphism between $\Gal(L'/L)$ and $F$. This proves the result.
        \end{proof}

    \subsection{Very weak approximation}
    We use the following definition of very weak approximation which is analogous to the classical case of number fields.
    \begin{defi} \label{def weak approximation}
        Let $X$ be a variety over $K$ and $T$ a subset of $C^{(1)}$. We say that $X$ has approximation in $T$ if the image of the diagonal morphism
        \[
            X(K) \to \prod_{v \in T} X(K_v)
        \]
        is dense with respect to the product topology. We also say that $X$ has approximation away from $T$ if $X$ has approximation in every finite subset $S$ of $C^{(1)}$ disjoint grom $T$. We say that $X$ has very weak approximation if there exists a finite set $T$ such that $X$ has approximation away from $T$.
    \end{defi}
    
    Let $G$ be a semi-simple simply connected linear algebraic group and $F$ a finite $K$-subgroup. The property of weak approximation for the variety $X = G/F$ only depends on $F$ as a $K$-group and it can be rephrased just in terms of Galois cohomology.
    \begin{defi} \label{def approximation outside S}
        Let $T$ be a subset of $C^{(1)}$. We say that a finite $K$-group $F$ has approximation in $T$ if the diagonal morphism
        \[
        \H^1(K,F) \to \prod_{v \in T}\H^1(K_v, F)
        \]
        is surjective. We also say that $F$ has approximation away from $T$ if it has approximation in every finite subset $S$ of $C^{(1)}$ disjoint from $T$. Moreover, $F$ is said to have very weak approximation if there exists a finite subset $T$ of $C^{(1)}$ such that $F$ has approximation away from $T$.
    \end{defi}
        Note that fields of type (a') and (b') satisfies Serre's conjecture II. Indeed, fields of type (a') are $C_2$ and for fields of type (b') see \cite{CTGP2004ArithLinGrpsTwoDim}. Then the argument in the introduction of  \cite{Luco2014ReductionFiniteStabilizer} can be adapted to prove the following proposition. 
        \begin{prop} \label{prop weak approx for homosp depends only on stab}
            Let $G$ be a semi-simple simply connected linear algebraic group and $F$ a finite $K$-subgroup. Denote by $X$ the variety $G/F$. Then for every finite subset $S$ of $C^{(1)}$, the variety $X$ has approximation in $S$ in the sense of Definition \ref{def weak approximation} if and only if $F$ has approximation in $S$ in the sense of Definition \ref{def approximation outside S}.
        \end{prop}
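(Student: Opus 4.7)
The plan is to deduce the equivalence from the exact sequence of pointed sets in non-abelian Galois cohomology associated to $1\to F\to G\to X\to 1$, namely
\[
G(K)\to X(K)\xrightarrow{\ \delta\ }\H^1(K,F)\to \H^1(K,G),
\]
together with its analogue over each completion $K_v$. Since fields of type (a') and (b') satisfy Serre's conjecture II and $G$ is semi-simple simply connected, one has $\H^1(K,G)=\H^1(K_v,G)=1$; hence the connecting maps $\delta$ and $\delta_v$ are surjective, and by general non-abelian cohomology their fibers are exactly the orbits of $G(K)$ and $G(K_v)$ respectively. Moreover $G\to X$ is an $F$-torsor with $F$ finite \'etale, hence itself \'etale, so Propositions \ref{prop val henselian is top henselian} and \ref{prop etale are local homeo non finite type} imply that for every $x\in X(K_v)$ the orbit map $G(K_v)\to X(K_v),\ g\mapsto g\cdot x$, is a local homeomorphism. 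The $G(K_v)$-orbits are therefore open in $X(K_v)$, so $\delta_v$ is continuous, surjective, and open when $\H^1(K_v,F)$ carries the discrete topology.

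For the direction \textbf{``$X$ has approximation in $S$ $\Rightarrow$ $F$ has approximation in $S$''} I would proceed as follows. Given a family $(\xi_v)_{v\in S}\in\prod_{v\in S}\H^1(K_v,F)$, use surjectivity of $\delta_v$ to lift each $\xi_v$ to some $x_v\in X(K_v)$. By the assumed density of $X(K)$ in $\prod_{v\in S} X(K_v)$, find $x\in X(K)$ whose image in each $X(K_v)$ lies in the open orbit $\delta_v^{-1}(\xi_v)$; then $\delta(x)\in \H^1(K,F)$ restricts to $\xi_v$ for every $v\in S$.

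For the converse direction, given $(x_v)_{v\in S}\in\prod_{v\in S} X(K_v)$ and open neighbourhoods $\Omega_v\ni x_v$, I would apply the hypothesis to the classes $(\delta_v(x_v))_{v\in S}$ to find $\xi\in\H^1(K,F)$ with $\xi|_{K_v}=\delta_v(x_v)$ for all $v\in S$, and lift $\xi$ to $x\in X(K)$ via $\delta$. For each $v\in S$ one has $\delta_v(x)=\delta_v(x_v)$, so $x_v=g_v\cdot x$ for some $g_v\in G(K_v)$. By continuity of the $G$-action on $X$, there exists an open neighbourhood $U_v$ of $g_v$ in $G(K_v)$ such that $U_v\cdot x\subseteq \Omega_v$. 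The final step is to invoke weak approximation for $G$ in $S$ to produce $g\in G(K)$ with $g\in U_v$ for every $v\in S$; then $g\cdot x\in X(K)$ is the desired point.

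The main obstacle is this last step: one needs weak approximation for the semi-simple simply connected $K$-group $G$ with respect to the finite set $S$. For $G=\mathrm{SL}_n$ this is immediate from weak approximation on affine space together with Hilbert 90; in general it is the crucial extra input beyond the cohomological formalism, and its validity over fields of type (a') and (b') is exactly what makes the argument of \cite{Luco2014ReductionFiniteStabilizer} transportable to our setting. The remaining ingredients---the exact sequence, Serre's conjecture II, and the openness of orbits---are either invoked directly from the references or provided by our implicit function theorem from Section \ref{Sec implicit function}.
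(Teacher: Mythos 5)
Your proposal is correct and follows essentially the same route as the paper, whose proof consists precisely of noting that Serre's conjecture II holds for fields of type (a') and (b') and adapting the fibration argument from the introduction of \cite{Luco2014ReductionFiniteStabilizer}; you simply make the ingredients explicit (the exact sequence of pointed sets, openness of $G(K_v)$-orbits via the implicit function theorem, and weak approximation for $G$ at the finitely many places of $S$). The one input you leave without a precise reference, weak approximation for a general semi-simple simply connected $G$ over these fields, is exactly the ingredient the paper also leaves implicit in its citation, and for the paper's intended application $G=\SL_n$ suffices, where it holds for the reason you give.
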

    The main result of this section is the following theorem.
    \begin{thm}\label{thm weak approx split extension}
        Let $K$ be either the function field of a curve over $\C(\!(t)\!)$ or a finite extension of $\C(\!(x,y)\!)$. Let $A$ be a finite abelian $K$-group and $F$ a finite $K$-group having approximation away from $\Bad_{F}$. Suppose that we have a split short exact sequence 
        \begin{equation*}
             \begin{tikzcd}
            0 \ar[r] & A \ar[r]& E \ar[r] & F \ar[r] \ar[l, bend right = 50]& 0.
            \end{tikzcd}
        \end{equation*}
        Then $E$ has approximation outside of $\Bad_E$.
    \end{thm}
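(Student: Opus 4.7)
The plan is to follow the strategy of \cite[Theorem 4.1]{Lu-Nef-Dem}, replacing Chebotarev's density theorem by our Reocurrence Theorem. Thanks to Proposition \ref{prop hatZ has enough automorphism}, Theorem \ref{thm finer reocurrence} applies to our $K$, so Lemmas \ref{lemma reocurence cohomology} and \ref{lemma cocylces fun calculation} are available. By Proposition \ref{prop weak approx for homosp depends only on stab}, it suffices to prove the cohomological form: for every finite $S \subseteq C^{(1)} \setminus \Bad_E$ and every family $(\alpha_v)_{v \in S}$ with $\alpha_v \in \H^1(K_v, E)$, I must produce $\alpha \in \H^1(K, E)$ with $\alpha|_{K_v} = \alpha_v$ for all $v \in S$.

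The first step is to approximate modulo $A$. Let $\pi \colon E \to F$ be the projection and $s \colon F \to E$ the given section. Since the splitting field of $F$ is contained in that of $E$, one has $\Bad_F \subseteq \Bad_E$, hence $S \cap \Bad_F = \emptyset$. The hypothesis of approximation for $F$ applied to the family $(\pi_\ast \alpha_v)_{v \in S}$ then yields $\beta \in \H^1(K, F)$ restricting to $\pi_\ast \alpha_v$ at each $v \in S$. Fixing a cocycle $b$ representing $\tilde\beta := s_\ast(\beta) \in \H^1(K, E)$ provides a global first approximation to $(\alpha_v)$ modulo $A$.

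The second step is to twist by $b$ and reduce to an abelian approximation problem. Since $s$ is $\Gamma_K$-equivariant, twisting yields a split short exact sequence $1 \to A' \to {}^b E \to F \to 1$ with $A' := {}^b A$, and under the twisting bijection $\H^1(K_v, E) \simeq \H^1(K_v, {}^b E)$ the class $\tilde\beta|_{K_v}$ becomes trivial. Thus each $\alpha_v$ corresponds to a class that projects trivially to $\H^1(K_v, F)$ and lifts to some $a_v \in \H^1(K_v, A')$. Equivalently, using that the fiber of $\H^1(K, E) \to \H^1(K, F)$ over $\beta$ is a torsor under $\H^1(K, A')$, the theorem reduces to finding $a \in \H^1(K, A')$ restricting to $(a_v)_{v \in S}$. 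Note that any Galois extension splitting $E$ still splits $A'$.

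The hard part will be this final abelian approximation. If no such global $a$ exists, the strategy is to use arithmetic duality together with the reocurrence machinery to derive a contradiction. Specifically, the cokernel of $\H^1(K, A') \to \prod_{v \in S} \H^1(K_v, A')$ should be detected by a nonzero class in some $\Sha^1_S(K, M)$ for an appropriate finite abelian $K$-group $M$ split by the same field $L/K$ as $A'$. By Proposition \ref{prop Sha in the image of inf}, such a class inflates from $\H^1(\widetilde L/K, M)$. Picking $v \in S$ at which this class does not vanish, Lemma \ref{lemma reocurence cohomology} supplies infinitely many places $w \in C^{(1)} \setminus S$ with isomorphisms $\rho^\ast_{v, w}$ identifying the local cohomology at $w$ with that at $v$, and Lemma \ref{lemma cocylces fun calculation} ensures that the restriction of the global class at each such $w$ remains nontrivial. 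This contradicts membership in $\Sha^1_S(K, M)$, so the obstruction vanishes, $a$ exists, and untwisting gives the desired $\alpha \in \H^1(K, E)$.
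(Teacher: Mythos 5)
Your overall strategy is the right one (it is the same Lu--Nef--Dem adaptation the paper uses: approximate modulo $A$, twist, reduce via duality to a statement about $\Sha^1_S$ of the dual of the twisted module, and kill the obstruction using reocurrence), but the crucial preparatory step is missing and the final contradiction does not go through as written. The isomorphisms $\rho^*_{v,w}$ of Lemma \ref{lemma reocurence cohomology} are abstract identifications of local cohomology sets; they say nothing, by themselves, about the restrictions of a \emph{given} global class. To transport non-vanishing of $\alpha\in\Sha^1_S(K,\widehat{{}_cA})$ from $v\in S$ to a place outside $S$, you need the decomposition groups of $v$ and of the auxiliary place to be conjugate in $\Gal(L'/K)$, where $L'$ is cut out by the kernel of the twisting cocycle restricted to $\Gamma_L$; that is exactly the content of Lemma \ref{lemma cocylces fun calculation}, and its hypotheses are that the global class $\gamma\in\H^1(K,F)$ restricts at $v$ and at $v'$ to classes \emph{matched} by $\rho^*_{v,v'}$, and that a representing cocycle $c$ is \emph{surjective} on $\Gamma_L$. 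Neither hypothesis is secured in your construction: you only impose local conditions at $S$ when invoking approximation for $F$, whereas one must simultaneously impose the duplicated conditions $\gamma_{v'}=\rho^*_{v,v'}(\gamma_v)$ at a set $S'$ of new places (one $v'\notin S\cup\Bad_E$ for each $v\in S$), and additionally impose unramified conditions at totally split places, one per conjugacy class of $F$ (using Proposition \ref{prop existence totally split} and Theorem \ref{thm reocurrence}), in order to force $c|_{\Gamma_L}$ to be surjective. Without these extra local conditions, Lemma \ref{lemma cocylces fun calculation} simply does not apply, and the claim that ``the restriction of the global class at each such $w$ remains nontrivial'' has no justification.

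There is also a concrete error in your twisting step: the assertion that any Galois extension splitting $E$ still splits $A'={}^{b}A$ is false. Since $A$ is abelian, the twisted action of $s\in\Gamma_L$ on ${}^{b}A$ is conjugation by the image of $b_s$ in $F$, which is nontrivial in general (indeed, in the correct argument one \emph{arranges} $c=\pi(b)$ to be surjective on $\Gamma_L$). The module ${}^{b}A$, and hence its Cartier dual, is split by $L'$, not by $L$, so Proposition \ref{prop Sha in the image of inf} must be applied with $\widetilde{L'}$, and the vanishing argument at $v$ uses the commutativity of inflation from $\H^1(\widetilde{L'}/K,\widehat{{}_cA})$ with the isomorphism $\H^1(L'_v/K_v,\widehat{{}_cA})\simeq\H^1(L'_{v'}/K_{v'},\widehat{{}_cA})$ coming from the conjugacy of decomposition groups in $\Gal(L'/K)$. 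Repair the construction of $\gamma$ as above (conditions at $S\cup S'$ plus surjectivity of $c|_{\Gamma_L}$), replace $L$ by $L'$ in the splitting and inflation statements, and your argument becomes the paper's proof.
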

    To prove this theorem, we will need the following lemma:
    \begin{lemma} \label{lemma weak approximation and sha}
        Let $A$ be a finite abelian $K$-group and $S$ a finite subset of $C^{(1)}$. Denote by $\widehat{A}$ the Cartier dual group i.e $\widehat{A} = \Hom(A,\mu )$. Then $A$ has approximation in $S$ if and only if 
        \[
            \Sha^1_S(K,\widehat{A}) = \Sha^1(K,\widehat{A}).
        \]
    \end{lemma}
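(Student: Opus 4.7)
The plan is to deduce this lemma from the Poitou--Tate type duality theorems available in this setting: for fields of type (a'), as in \cite{CTH2015DualCt}, and for fields of type (b'), as in \cite{diego2019dual2dim}. These provide an exact sequence
\[
\H^1(K, A) \xrightarrow{\mathrm{loc}} \prod{}'_v \H^1(K_v, A) \xrightarrow{\Phi} \H^1(K, \widehat{A})^D,
\]
where $\widehat{A} = \Hom(A,\mu)$, $\prod{}'_v$ is the restricted product with respect to unramified classes, the superscript $D$ denotes Pontryagin dual, and $\Phi((\alpha_v))(\beta) = \sum_v \langle \alpha_v, \beta|_v\rangle_v$ is built from the local duality pairings. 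Equivalently, under the perfect pairing of the two restricted products, the image of $\mathrm{loc}$ is exactly the annihilator of the image of the analogous localization $\H^1(K, \widehat{A}) \to \prod{}'_v \H^1(K_v, \widehat{A})$.

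Since $A$ and $S$ are finite, $\prod_{v \in S}\H^1(K_v, A)$ is a finite abelian group, so $A$ has approximation in $S$ if and only if the diagonal map $\H^1(K, A) \to \prod_{v \in S}\H^1(K_v, A)$ is surjective, which by finite Pontryagin duality amounts to the injectivity of its dual. Using local duality $\H^1(K_v, A)^D \cong \H^1(K_v, \widehat{A})$, the dual becomes
\[
\prod_{v \in S} \H^1(K_v, \widehat{A}) \longrightarrow \H^1(K, A)^D,\qquad (\beta_v)_{v \in S} \longmapsto \Bigl(\alpha \mapsto \sum_{v \in S} \langle \alpha|_v, \beta_v\rangle_v\Bigr).
\]
The key step is then to identify its kernel: given $(\beta_v)_{v \in S}$, I would extend it by zero outside $S$ to an element $\tilde\beta \in \prod{}'_v \H^1(K_v, \widehat{A})$. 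The vanishing condition $\sum_{v \in S}\langle \alpha|_v,\beta_v\rangle_v = 0$ for every $\alpha \in \H^1(K,A)$ is precisely the condition that $\tilde\beta$ annihilates the image of $\mathrm{loc}$, and by the Poitou--Tate duality recalled above this means $\tilde\beta$ lies in the image of $\H^1(K, \widehat{A}) \to \prod{}'_v \H^1(K_v, \widehat{A})$. Since $\tilde\beta$ vanishes outside $S$, its global preimages are exactly the elements of $\Sha^1_S(K, \widehat{A})$, and every such preimage does give rise to a kernel element via its restrictions at $v \in S$.

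The lemma follows immediately: the dual map is injective iff every $\beta \in \Sha^1_S(K, \widehat{A})$ also vanishes at each $v \in S$, i.e., lies in $\Sha^1(K, \widehat{A})$; combined with the trivial inclusion $\Sha^1 \subseteq \Sha^1_S$, this gives the desired equivalence. The main subtlety I anticipate is simply to match the precise formulation of Poitou--Tate in \cite{CTH2015DualCt} and \cite{diego2019dual2dim} to the present setup, ensuring both the exactness at the middle term and the identification of the ``reciprocity'' map $\Phi$ with the sum of local duality pairings, so that the kernel-as-annihilator translation carried out above is legitimate in our context.
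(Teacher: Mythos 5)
Your argument is correct and is essentially the paper's own route: the paper simply quotes \cite[Lemme 1.2]{diego2017LocGlobII} for case (a') and says the case (b') follows similarly from the duality results of \cite{diego2019dual2dim}, and the content of those citations is exactly the Poitou--Tate/local-duality dualization you carry out (surjectivity of the diagonal map onto $\prod_{v\in S}\H^1(K_v,A)$ dualizes to injectivity of a map whose kernel is the image of $\Sha^1_S(K,\widehat{A})$, giving the equivalence with $\Sha^1_S(K,\widehat{A})=\Sha^1(K,\widehat{A})$). So the proposal matches the paper's approach, just with the cited duality argument written out explicitly.
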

    \begin{proof}
        When $K$ is a function field, this is non other than \cite[Lemme1.2]{diego2017LocGlobII}. When $K$ is a finite extension of $\C(\!(x,y)\!)$ one can carry out a very similar proof based on \cite[Theorem 3.5 and Proposition 3.7]{diego2019dual2dim}.
    \end{proof}

    \begin{proof}[Proof of Theorem \ref{thm weak approx split extension}]
        We follow the same steps as in \cite{Lu-Nef-Dem}. \\
        
        Let $S$ be a finite subset of $C^{(1)}$ disjoint from $\Bad_{E}$ and for every $v \in S$ fix a class $\beta_v \in \H^1(K_v,E)$. Let $L/K$ be the minimal extension splitting $E$. \\
        
        \textbf{Step 1:} construct a class $\beta' \in \H^1(K,E)$ having prescribed images in $\H^1(K_v,F)$ for $v \in S\cup S'$ where $S'$ is a suitable finite set of places. \\ 

        Consider the following commutative diagram, whose rows are exact in the sens of pointed sets
        \begin{equation}\label{diagrama aproximacion muy debil}
            \begin{tikzcd}
                \H^1(K,A) \ar[r] \ar[d] & \H^1(K,E) \ar[r] \ar[d] & \H^1(K,F) \ar[d] \ar[l,bend right = 30]\\ 
                \prod_{v \in C^{(1)}} \H^1(K_v,A) \ar[r] & \prod_{v \in C^{(1)}} \H^1(K_v,E) \ar[r] & \prod_{v \in C^{(1)}} \H^1(K_v,F) \ar[l,bend right = 30]
            \end{tikzcd}
        \end{equation}
        Denote by $\gamma_v$ the image of $\beta_v$ in $\H^1(K_v,F)$. Applying Lemma \ref{lemma reocurence cohomology} we can find for each $v \in S$ a place $v' \not \in S \cup \Bad_{E}$ together with isomorphisms 
        \[
                \rho_{v,v'}^*: \H^1(K_v,F) \to \H^1(K_{v'},F).
        \]
        Since there are infinitely many such $v'$ for each $v$, we may choose all the $v'$ pairwise different. Denote by $S'$ the set of the $v'$ we just chose. Let $\gamma_{v'}$ be $\phi^*_{v,v'}(\gamma_v)$. \par 
        Since we chose the places of $S'$ outside $\Bad_E$ and $F$ has approximation away from $\Bad_F \subseteq \Bad_E$, there exists a class $\gamma \in \H^1(K,F)$ that restricts to $\gamma_v$ for every $v \in S\cup S'$. Moreover, we can assume that a cocycle $c$ representing $\gamma$ restricted to $\Gamma_L$ is surjective. Indeed, by Proposition \ref{prop existence totally split} and Theorem \ref{thm reocurrence} there is an infinite number of places $w \not \in S \cup S' \cup \Bad_E$ that are totally split on the extension $L/K$. In particular, for every such $w$, the absolute Galois group of $K_w$ acts trivially on $F$. For every conjugacy class of $F$ we can choose a different place $w \not \in S \cup S' \cup \Bad_E$ and an unramified class $\gamma_w \in \H^1(K_w,F)$ represented by a morphism sending a topological generator of $\Gamma_{K_w}^{nr}$ to an element in the conjugacy class. Adding these local conditions to $S \cup S'$ forces surjectivity of $c$. \par 

         Denote by $\beta'$ the image of $\gamma$ by the section $\H^1(K,F) \to \H^1(K,E)$. \\

        \textbf{Step 2:} reducing to proving $\Sha_S^1(K,\widehat{{}_cA}) = \Sha^1(K,\widehat{{}_cA})$. \\
        
        We can twist the diagram \eqref{diagrama aproximacion muy debil} by $c$, a class representing $\gamma$, to get the following commutative diagram with exact rows
         \begin{equation*}
            \begin{tikzcd}
                \H^1(K,{}_cA) \ar[r] \ar[d] & \H^1(K,{}_cE) \ar[r] \ar[d] & \H^1(K,{}_cF) \ar[d] \ar[l,bend right = 30]\\ 
                \prod_{v \in \Omega} \H^1(K_v,{}_cA) \ar[r] & \prod_{v \in \Omega} \H^1(K_v,{}_cE) \ar[r] & \prod_{v \in \Omega} \H^1(K_v,{}_cF). \ar[l,bend right = 30]
            \end{tikzcd}
            \end{equation*}
            By construction we know that the images of $\beta'$ and $\beta_v$ in $\H^1(K_v,F)$ are $\gamma_v$ for $v \in S\cup S'$ and, similarly, the images of ${}_c\beta'$ and ${}_c \beta_v$ in $\H^1(K_v,{}_cF)$ are ${}_c\gamma_v$ for every $v \in S \cup S'$. Since the class ${}_c \gamma_v$ is the distinguished element of the pointed set $\H^1(K_v,{}_cF)$, the class ${}_c\beta_v$ comes from $\alpha_v \in \H^1(K_v, {}_cF)$. In order to conclude, we need to find a class $\alpha \in \H^1(K,{}_cA)$ such that its restriction is $\alpha_v$ for every $v \in S$. Hence it suffices to prove $\Sha_S^1(K,\widehat{{}_cA}) = \Sha^1(K,\widehat{{}_cA})$ thanks to Lemma \ref{lemma weak approximation and sha}.\\
           
        \textbf{Step 3:} proving $\Sha_S^1(K,\widehat{{}_cA}) = \Sha^1(K,\widehat{{}_cA})$ \\

        Let $L'$ be the extension corresponding to the kernel of $c: \Gamma_L \to F$ and take $\alpha \in \Sha^1_S(K,\widehat{{}_cA})$. Fix $v \in S$ and $v' \in S'$ corresponding to $v$ as in the first step. Applying Lemma \ref{lemma cocylces fun calculation} we see that the decomposition groups of $v$ and $v'$ are conjugates in $\Gal (L'/K)$. Note that $L'$ splits $\widehat{{}_cA}$ because $K$ contains all the root of unity and $L'$ splits ${}_cA$. \par 
            We can apply Lemma \ref{prop Sha in the image of inf} to see that $\alpha$ is in the image of the inflation map $\Inf: \H^1(\widetilde{L'}/K,\widehat{{}_cA}) \to \H^1(K, \widehat{{}_cA})$. The fact that the decomposition groups of $v$ and $v'$ are conjugates implies the existence of an isomorphism 
            \[
            \H^1(L'_v/K_v,\widehat{{}_cA}) \xrightarrow{\simeq} \H^1(L'_{v'}/K_{v'},\widehat{{}_cA})
            \]
            compatible with restrictions, meaning that the following triangle commutes
            \begin{equation*}
                \begin{tikzcd}[column sep =1ex, row sep = 7ex]
                    &\H^1(\widetilde{L'}/K,\widehat{{}_cA}) \ar[rd] \ar[dl]& \\    
                \H^1(L'_{v}/K_v, \widehat{{}_c A}) \ar[rr] & &  \H^1(L'_{v'}/K_{v'},\widehat{{}_c A}).                
                \end{tikzcd}
            \end{equation*}
            This commutativity implies that $\alpha_v = 0$ because we assume that $v' \not \in S$ and $\alpha \in \Sha_S^1(K,\widehat{{}_cA})$. Therefore $\alpha$ is in $\Sha^1(K,\widehat{{}_cA})$.
    \end{proof}    
    \appendix 
        \section{Reocurence for henselian fields}
        Let $k_0$ be a field of characteristic zero. We denote by $k_0[x,y]^h$ the henselisation of the local ring $k_0[x,y]_{(x,y)}$ and by $k_0(x,y)^h$ its fraction field, see \cite[\S 2.3]{NeronModels} for details. Similarly for one variable, that is $k_0[x]^h$ is the henselisation of the local ring $k_0[x]_{(x)}$ and $k_0(x)^h$ its fraction field. For a finite extension $K$ of $k_0(x,y)^h$ we will denote by $C$ the Dedekind scheme $\Spec A[x^{-1}]$ where $A$ is the integral closure of $k[x,y]^h$ in $K$. \par 
        In this appendix we give a proof of Theorem \ref{thm reocurrence} and Theorem \ref{thm finer reocurrence} in the case of a finite field extension of $k_0(x,y)^h$. The methods used to prove these results for finite field extensions of $k_0(\!(x,y)\! )$ can be adapted to give a proof for finite extensions of $k_0(x,y)^h$, but in this appendix we give a different proof based on the idea of ``approximating'' the field $k_0(x,y)^h$ by $k_0(x)^h(y)$.
    \begin{lemma}\label{lemma density for every valuation}
        Let $v$ be in $\Spec k_0[x,y]^h[x^{-1}]$. Then the subfield $k_0 (x )^h (y)$ is dense in $k_0(x,y)^h$ with respect to the $v$-adic topology.
    \end{lemma}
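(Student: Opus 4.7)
The plan is to reduce the density statement to an equality of completions, which will be established via Weierstrass preparation in the henselian setting. The closed point $v \in \Spec k_0[x,y]^h[x^{-1}]$ corresponds to a height-one prime $\mathfrak{p}$ of the two-dimensional regular henselian local UFD $k_0[x,y]^h$ that does not contain $x$. After dividing out by any factor of $x$, one picks a generator $f$ of $\mathfrak{p}$ whose reduction modulo $x$ in $k_0[y]^h$ is nonzero of some order $d \geq 1$ at $y = 0$.

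I would then apply the Weierstrass preparation theorem in the henselian setting to write $f = uW$ with $u \in (k_0[x,y]^h)^{\times}$ and $W \in k_0[x]^h[y]$ a Weierstrass polynomial of degree $d$. This gives $\mathfrak{p} = (W)$, and Weierstrass division provides the identification $k_0[x,y]^h/(W) = k_0[x]^h[y]/(W)$. Since $\mathfrak{p}$ is prime, $W$ is irreducible in $k_0[x,y]^h$, and a Gauss's lemma argument shows $W$ remains irreducible in $k_0(x)^h[y]$; hence the restriction $w := v|_{k_0(x)^h(y)}$ is the $(W)$-adic valuation on $k_0(x)^h(y)$. Taking fraction fields one obtains $\kappa(w) = k_0(x)^h[y]/(W) = \Frac(k_0[x,y]^h/(W)) = \kappa(v)$.

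It remains to compare the complete discrete valuation rings $\widehat{\mathcal{O}}_w$ and $\widehat{\mathcal{O}}_v$. The natural inclusion $\widehat{\mathcal{O}}_w \hookrightarrow \widehat{\mathcal{O}}_v$ is a local map of complete DVRs that sends the uniformizer $W$ to itself and is the identity on the common residue field. Reducing modulo $W^n$, one obtains a local map of Artin local rings of equal length $n$ with the same residue field; by Nakayama's lemma such a map is surjective, and thus an isomorphism for dimensional reasons. Passing to the limit in $n$ gives $\widehat{\mathcal{O}}_w = \widehat{\mathcal{O}}_v$, which is precisely the desired density statement: every element of $k_0(x,y)^h$ can be approximated to arbitrary $v$-adic precision by elements of $k_0(x)^h(y)$.

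The main technical obstacle is the use of Weierstrass preparation and division in the henselian (as opposed to the formal power series) setting. These results are classical but less widely used than their formal counterparts; one can deduce them either by applying the usual Weierstrass theorems in $k_0[[x,y]]$ and descending the resulting data to $k_0[x]^h[y]$ via Artin's approximation theorem, or by invoking a direct henselian proof available in the literature.
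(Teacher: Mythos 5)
Your proof is correct in substance, but it is organized differently from the paper's. The paper's argument is a direct successive-approximation: it invokes the henselian Weierstrass division theorem (exactly the technical input you identify as the main obstacle; the paper quotes it as \cite[Theorem 1.1]{HenselianPerp}) first to replace a uniformizer $\pi_v$ by one lying in $k_0[x]^h[y]$, and then iterates division by $\pi_v$ to produce, for each $n$, an element $f_n\in k_0[x]^h[y]$ with $v(f-f_n)\geq n$; density follows at once, with no need for preparation into a unit times a Weierstrass polynomial, no irreducibility of $W$ in $k_0[x]^h[y]$, and no identification of residue fields or completed local rings. Your route instead upgrades the division isomorphism $k_0[x,y]^h/(W)\cong k_0[x]^h[y]/(W)$ into the equality $\widehat{\mathcal{O}}_w=\widehat{\mathcal{O}}_v$ and deduces density formally. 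This costs a few verifications that you pass over quickly but which are all fillable: that $v$ is trivial on $k_0(x)^h$ (every element of $k_0[x]^h$ is a power of $x$ times a unit and $v(x)=0$), so that the restriction $w$ of $v$ to $k_0(x)^h(y)$ really is the $(W)$-adic valuation; that the ramification index is $1$ because $W$ generates the height-one prime; and the residue-field identification $\Frac\bigl(k_0[x]^h[y]/(W)\bigr)=\Frac\bigl(k_0[x,y]^h/(W)\bigr)$. In exchange it yields a slightly stronger and cleaner structural statement than density, namely that $k_0(x)^h(y)$ and $k_0(x,y)^h$ have the same completion at $v$. On the remaining technical point, your proposal to derive henselian Weierstrass preparation and division from the formal case via Artin approximation is workable but not the most economical path: the reference used in the paper supplies exactly the division statement needed (quotient in $k_0[x]^h[y]^h$, remainder in $k_0[x]^h[y]$), so both your preparation step and the division isomorphism can be quoted directly from it, just as the paper does.
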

    \begin{proof}
    Let $\pi_v \in k[x,y]^h$ be a uniformizer at $v$. Since $v$ is not $(x)$ we can apply \cite[Theorem 1.1]{HenselianPerp} to $\pi_v$ inside the ring $k_0[x]^h[y]^h$. Hence we may assume that $\pi_v$ is in $k_0[x]^h[y]$. For every $f \in k_0 [x,y]^h$ we can apply \cite[Theorem 1.1]{HenselianPerp} to get a unique decomposition
    \[
        f = q_0 \pi_v + r_0 
    \]
    with $q_0 \in k_0[x]^h[y]^h$ and $r_0 \in k_0[x]^h[y]$. Using the same argument for $q_0$ and continuing inductively, we get a decomposition
    \[
        f = q_n\pi_v^{n+1} + \cdots + r_1\pi_v + r_0
    \]
    with $r_i \in k_0[x]^h[y]$. Define $f_n$ to be $r_{n-1} \pi_v^{n-1} + \cdots + r_0 \in k_0[x]^h[y]$. We clearly have
    \[
        v(f-f_n) \geq n+1.
    \]
    Then every element of $k_0[x,y]^h$ can be arbitrarily approximated by one of $k_0[x]^h[y]$ with respect to the valuation $v$. In other words, the field $k_0(x)^h(y)$ is dense with respect to the $v$-adic topology. 
    \end{proof}
    Note that in the previous proof we can replace $k_0(x,y)^h$ by $k_0(\!(x,y)\!)$ and $k_0(x)^h(y)$ by $k_0(\!(x)\!)(y)$ to get an analogous result. Moreover, we can use the usual Weierstarss preparation theorem for formal power series instead of \cite[Theorem 1.1]{HenselianPerp}.
    \begin{thm} \label{thm reocurrence henselian}
        Let $K$ be a finite extension of $k_0(x,y)^h$ and $L/K$ a finite Galois extension with group $G$. Let $v \in C^{(1)}$ unramified in $L/K$. Then, there exists infinitely many places $w$ of $K$ such that the decomposition groups $G_{v}$ and $G_w$ are conjugates.
    \end{thm}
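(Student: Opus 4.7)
My plan is to descend the data $L/K$ to a Galois extension of function fields of curves over the henselian valued field $k_0(x)^h$, apply Theorem \ref{thm reocurrence} in case (a), and then transfer the places thus produced back to $K$ using the density afforded by Lemma \ref{lemma density for every valuation}. Write $E := k_0(x,y)^h$ and $E_0 := k_0(x)^h(y)$, so that $E_0$ is $v$-adically dense in $E$ by the lemma and $E_0$ is the function field of a smooth projective curve over $k_0(x)^h$. Choose a primitive element $\alpha$ of $K/E$ with minimal polynomial $P \in E[T]$ and $\beta$ of $L/E$ (so that $L/K$ is Galois) with minimal polynomial $Q \in E[T]$. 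Approximating the coefficients of $P$ and $Q$ by elements of $E_0$ in the $v$-adic topology and invoking Krasner's lemma, I obtain $P_0, Q_0 \in E_0[T]$ defining finite extensions $K_0 := E_0[T]/(P_0)$ and $L_0$ (the splitting field of $Q_0$ over $K_0$ inside $L$) with the property that $K = K_0 \otimes_{E_0} E$, $L = L_0 \otimes_{K_0} K$, and $\Gal(L_0/K_0) \cong G$ via restriction.

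Since $K_0$ is a finite extension of $E_0$, it is the function field of a smooth projective curve over $k_0(x)^h$. The restriction $v_0 := v|_{K_0}$ is non-trivial and is trivial on $k_0(x)^h$ (the latter because the prime of $A[x^{-1}]$ defining $v$ meets the subfield $k_0(x)^h$ only in $(0)$), so $v_0$ belongs to $C_0^{(1)}$. Density yields $K_v = (K_0)_{v_0}$, whence $L \otimes_K K_v = L_0 \otimes_{K_0} (K_0)_{v_0}$, showing that $v_0$ is unramified in $L_0/K_0$ exactly because $v$ is unramified in $L/K$. Applying Theorem \ref{thm reocurrence} to $L_0/K_0$ and $v_0$, I obtain an open neighborhood $U_0$ of $v_0$ in $C_0(k(v_0))$, containing infinitely many places of $C_0^{(1)}$, such that every $w_0 \in U_0 \cap C_0^{(1)}$ has decomposition group in $G$ conjugate to $G_{v_0}$.

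It then remains to transfer these places to $C^{(1)}$. Concretely I want to show that for $w_0$ sufficiently close to $v_0$ there exists a place $w \in C^{(1)}$ with $w|_{K_0} = w_0$ and $K_w = (K_0)_{w_0}$: once this is granted, the identification $L \otimes_K K_w = L_0 \otimes_{K_0} (K_0)_{w_0}$ yields a bijection between places of $L$ above $w$ and those of $L_0$ above $w_0$ with matching completions, hence $G_w = G_{w_0}$ inside $G$, and distinct $w_0$ yield distinct $w$. The hard part will be this construction, which is exactly where the main text had to invoke the non-finite type implicit function theorem; here the idea is to use density of $K_0$ in $K_v$ together with Hensel's lemma to deform the natural inclusion $K \hookrightarrow K_v = (K_0)_{v_0}$ into a $K_0$-algebra map $K \to (K_0)_{w_0}$ for $w_0$ near $v_0$, whose kernel defines the required place $w$ of $K$ above $w_0$.
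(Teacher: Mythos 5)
Your overall skeleton (descend $L/K$ to an extension of $E_0=k_0(x)^h(y)$, apply Theorem~\ref{thm reocurrence} in case (a) over the henselian valued field $k_0(x)^h$, then transfer places back using Lemma~\ref{lemma density for every valuation}) is the same as the paper's, but the two steps where you execute it differently both have genuine gaps. First, the descent via Krasner is not valid. Krasner's lemma (and continuity of roots) requires the base field to be henselian for the valuation used in the approximation, and $E=k_0(x,y)^h$ is henselian only for its two--dimensional local structure, not for the rank--one valuation $v$: for instance, for $v=v_y$ the element $1+y/x$ is a $v_y$-unit with residue $1$, hence a square in the $v_y$-adic completion, but it is not a square in $E$ since $x(x+y)$ has odd valuation along the height--one prime $(x+y)$. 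Applying Krasner over the completion $K_v$ only yields the local comparison $K\otimes_E E_v\simeq \bigl(E_0[T]/(P_0)\bigr)\otimes_{E_0}E_v$; it gives no control over the extension of $E$ itself, and the asserted identities $K=K_0\otimes_{E_0}E$, $L=L_0\otimes_{K_0}K$ with $\Gal(L_0/K_0)\simeq G$ are in general false for a $v$-adic approximation (already for $K=E(\sqrt f)$: closeness of $f_0$ to $f$ at $v$ gives $f/f_0\in E_v^{*2}$, not $f/f_0\in E^{*2}$). The paper avoids approximation entirely: it takes a primitive element $\alpha$ of the Galois closure of $L$ over $k_0(x,y)^h$, sets $L_0^{\mathrm{gal}}:=E_0(\alpha)$, and defines $L_0$ and $K_0$ as fixed fields; this produces only subfields $K_0\subseteq K$, $L_0\subseteq L$ with $\Gal(L_0/K_0)=G$, which is all that is needed.

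Second, the transfer step, which you yourself call ``the hard part,'' is left unproven and is really the crux. Producing, for every $w_0$ near $v_0$, a $K_0$-embedding $K\hookrightarrow (K_0)_{w_0}$ means embedding the \emph{infinite} algebraic extension $K/K_0$ (it contains $k_0(x,y)^h/E_0$) into a complete discretely valued field in which $x$ is a unit; Hensel's lemma, or the universal property of the henselization, does not furnish such a map (the composite $k_0[x,y]_{(x,y)}\to\mathcal{O}_{w_0}$ is not local, since $w_0(x)=0$), and this is precisely the statement for which the main text needed the non--finite--type implicit function theorem (Proposition~\ref{prop etale are local homeo non finite type}) --- the machinery the appendix is designed to avoid. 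The paper's proof never needs $K_w=(K_0)_{w_0}$: it applies Theorem~\ref{thm reocurrence} to $L_0/K_0$, lifts each resulting place $u_0$ of $K_0$ to some place $u$ of $K$ (valuations always extend along the algebraic extension $K/K_0$, and distinct $u_0$ give distinct $u$), and then shows directly that the stabilizer in $G$ of a place of $L$ above $u$ equals that of its restriction to $L_0$, by approximating any $f\in L$ by $f_0\in L_0$ with the same values under $w$ and under $w\circ\sigma^{-1}$, using that Lemma~\ref{lemma density for every valuation} holds at every height--one prime. Replacing your completions--matching step by this elementary decomposition--group comparison (and your Krasner descent by the primitive--element/fixed--field construction) is what makes the argument go through.
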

    \begin{proof}
        We begin by constructing fields $L_0$, $L_0^{\mathrm{gal}}$ and $K_0$ fitting in the following diagram
        \begin{equation*}
            \begin{tikzcd}[row sep = 1ex]
                    & L^{\mathrm{gal}}\\
                    L_0^{\mathrm{gal}} \ar[ur,dash] & \\
                    & L \ar[uu,dash,swap, "G_1"]\\
                    L_0 \ar[uu,dash,swap,"G_1"] \ar[ur,dash] & \\
                    & K\ar[uu,dash,swap,"G"] \\
                    K_0\ar[uu,dash,swap,"G"] \ar[ur,dash] & \\
                    & k_0(x,y)^h\ar[uu,dash] \\
                    k_0(x)^h(y) \ar[uu,dash] \ar[ur,dash]
            \end{tikzcd}
        \end{equation*}
        where $L^{\mathrm{gal}}$ is the Galois closure of $L/ k_0(x,y)^h$. \par 
        Since the extension $L^{\mathrm{gal}}/k_0(x,y)^h$ is separable, there exists a primitive element $\alpha \in L^{\mathrm{gal}}$. Define $L_0^{\mathrm{gal}}$ to be $k_0(x,y)^h(\alpha)$. Note that we can identify the Galois group of $L^{\mathrm{gal}}/k_0(x)^h(y)$ as a subgroup of $\Gal(L_0^{\mathrm{gal}}/k_0(x)^h(y))$. In particular, $G_1 = \Gal(L^{\mathrm{gal}}/L)$ is a subgroup of $\Gal(L^{\mathrm{gal}}_0 / k_0(x)^h(y))$. Denote by $L_0$ the subfield of $L_0^{\mathrm{gal}}$ fixed by $G_1$. Finally, using that $L_0$ is a subfield of $L$ we define $K_0$ to be the subfield of $L_0$ fixed by $G$. Note that the extension $L_0/K_0$ is Galois with group $G$. \par
        
        We need to compare the decomposition groups of $L_0/K_0$ and $L/K$. Let $D$ be the Dedekind scheme corresponding to $L$ and $w$ be a place in $D^{(1)}$ above $v$. Denote by $v_0$ and $w_0$ the restriction of $v$ to $K_0$ and $w$ to $L_0$ respectively. We claim that the decomposition groups $G_{v_0}$ and $G_v$ defined as stabilisers of $w_0$ and $w$ respectively are equal. Indeed, the inclusion $G_{v} \subseteq G_{v_0}$ is straightforward because $w_0$ is the restriction of $w$. For the converse, take $\sigma \in G_{v_0}$ and $f \in L$. Using Lemma \ref{lemma density for every valuation} and continuity of $\sigma$ with respect to $w$, we can find $f_0 \in L_0$ such that $w(f) = w(f_0)$ and $w(\sigma^{-1}(f)) = w(\sigma^{-1}(f_0))$. Hence $\sigma$ is in $G_v$ because $w(\sigma^{-1}(f)) = w(\sigma^{-1}(f_0)) = w(f_0) = w(f)$. \par 
        Note that $v_0$ is unramified in $L_0$ because $v$ is unramified in $L/K$. Denote by $C_0$ the Dedekind scheme corresponding to $K_0$. Applying Theorem \ref{thm reocurrence} to $K_0$, we get infinitely many places $u_0\in C_0^{(1)}$ such that $G_{u_0}$ and $G_{v_0}$ are conjugates. Every $u_0$ has finitely many valuations $u \in C^{(1)}$ above because the morphism $C \to C_0$ has finite fibers. We conclude using that the decomposition groups coincide.
    \end{proof}
    \bibliography{ref}
    \bibliographystyle{alpha}
\end{document}